\documentclass[12pt]{article}

\usepackage{latexsym,amsfonts,amsmath,amsthm,amssymb, epsfig}
\usepackage{color}


\usepackage[vcentering,dvips]{geometry}
\geometry{papersize={195mm,270mm},total={140mm,220mm}}

\newtheorem{thm}{Theorem}
\newtheorem{lem}[thm]{Lemma}

\newtheorem{cor}[thm]{Corollary}

\theoremstyle{remark}
\newtheorem{rem}{Remark}

\theoremstyle{definition}

\newcommand{\R}{\mathbb{R}}

\newcommand{\N}{\mathbb{N}}

\newcommand{\rd}{\,{\rm d}}
\newcommand{\bsx}{{\boldsymbol x}}
\newcommand{\bst}{{\boldsymbol t}}
\newcommand{\bsalpha}{{\boldsymbol \alpha}}
\newcommand{\bsbeta}{{\boldsymbol \beta}}
\newcommand{\bsgamma}{{\boldsymbol \gamma}}
\newcommand{\bszero}{{\boldsymbol 0}}
\newcommand{\uu}{\mathfrak{u}}

\newcommand{\cP}{\mathcal{P}}
\newcommand{\cA}{\mathcal{A}}





\title{Conditions for tractability of the weighted $L_p$-discrepancy and 
integration in non-homogeneous tensor product spaces}  

\author{Erich Novak and  Friedrich Pillichshammer}
\date{}

\begin{document}

\maketitle

\begin{abstract}
We study tractability properties of the weighted $L_p$-discrepancy. The concept of {\it weighted} discrepancy was introduced by Sloan and Wo\'{z}\-nia\-kowski in 1998 in order to prove a weighted version of the Koksma-Hlawka 
inequality for the error of quasi-Monte Carlo integration rules. The weights have the aim to model the influence of different coordinates of integrands on the error. 
A discrepancy is said to be tractable if the information complexity, i.e., the minimal 
number $N$ of points such that the discrepancy is less than the initial discrepancy 
times an error threshold $\varepsilon$, does not grow exponentially fast with the dimension. 
In this case there are various notions of tractabilities used 
in order to classify the exact rate. 

For even integer parameters $p$ there are sufficient conditions on the 
weights available in literature, which guarantee the one or 
other notion of tractability. In the present paper we prove matching sufficient conditions (upper bounds) and neccessary conditions (lower bounds) for polynomial and weak tractability for all $p \in (1, \infty)$.

The proofs of the lower bounds are based on a general result for the information complexity 
of integration with positive quadrature formulas for tensor product spaces. 
In order to demonstrate this lower bound  
we consider as a second application the integration of 
tensor products of polynomials of degree at most 2.
\end{abstract}

\centerline{\begin{minipage}[hc]{130mm}{
{\em Keywords:} Weighted discrepancy, numerical integration, curse of dimensionality, tractability, quasi-Monte Carlo\\
{\em MSC 2010:} 11K38, 65C05, 65Y20}
\end{minipage}}

\section{Introduction and main result}\label{sec:intro}

For a set $\cP$ consisting of $N$ points $\bsx_1,\bsx_2,\ldots,\bsx_N$ in the $d$-dimensional unit-cube $[0,1)^d$ the local discrepancy function $\Delta_{\cP}:[0,1]^d \rightarrow \R$ is defined as $$\Delta_{\cP}(\bst)=\frac{|\{k \in \{1,2,\ldots,N\}\ : \ \bsx_k \in [\bszero,\bst)\}|}{N}-t_1t_2\cdots t_d,$$  for $\bst=(t_1,t_2,\ldots,t_d)$ in $[0,1]^d$, where $[\bszero,\bst)=[0,t_1)\times [0,t_2)\times \ldots \times [0,t_d)$.

The classical $L_p$-star discrepancy of a point set is defined as the $L_p$-norm of the local discrepancy function. See, e.g., \cite{BC,DT97,mat}. It is well-known that this $L_p$-star discrepancy is intimately related to the worst-case error of quasi-Monte Carlo rules for numerical integration 
via so-called Koksma-Hlawka type inequalities. 
See, e.g., \cite{H61,LP14, niesiam, NW10}. In view of this important application in 1998 Sloan and Wo\'{z}niakowski \cite{SW98} introduced a new concept of discrepancy, the so-called {\it weighted} $L_p$-discrepancy with the aim to model the influence of different coordinates of integrands on the error more sensitively. 

Let $\bsgamma=(\gamma_j)_{j \ge 1}$ be a sequence of positive real numbers, so-called coordinate weights. For $d \in \N$ let $[d]:=\{1,2,\ldots,d\}$ and for $\uu \subseteq [d]$ define $\gamma_{\uu}:=\prod_{j \in \uu} \gamma_j$. 

For a parameter $p \in [1,\infty)$ the weighted $L_p$-discrepancy of a point set $\cP$ is defined as 
\begin{equation}\label{def:wLpdisc}
L_{p,\bsgamma,N}(\cP):=\left(\sum_{\emptyset \not=\uu \subseteq [d]}\gamma_{\uu}^{p/2}\int_{[0,1]^{|\uu|}} |\Delta_{\cP}((\bst_{\uu},1))|^p \rd \bst_{\uu}\right)^{1/p},
\end{equation}
where, for a vector $\bst=(t_1,t_2,\ldots,t_d)\in [0,1]^d$ and for $\uu \subseteq [d]$ we write $\bst_{\uu}$ for the projection of $\bst$ to the coordinates which belong to $\uu$, i.e., $\bst_{\uu}=(t_j)_{j \in \uu}$ and $(\bst_{\uu},1)$ is the $d$-dimensional vector $\widetilde{\bst}=(\widetilde{t}_1,\widetilde{t}_2,\ldots,\widetilde{t}_d)$, where $\widetilde{t}_j=t_j$, if $j \in \uu$ and $\widetilde{t}_j=1$, if $j \not\in \uu$.

The weighted $L_{\infty}$-discrepancy can be introduced with the usual adaptions (see \cite{SW98}), but is not considered in the present paper.

Nowadays, the weighted $L_p$-discrepancy is a well studied quantity in literature (see, for example, \cite{DP10,KPW21,LP03,NW10,SW98}), in particular for $p=2$. It is well-known that there exists a close relation to numerical integration. This issue will be discussed in Section~\ref{sec:int}. 

For $N \in \N$ the $N$-th minimal weighted $L_p$-discrepancy in dimension $d$ is defined as $${\rm disc}_{p,\bsgamma}(N,d):=\min_{\cP} L_{p,\bsgamma,N}(\cP),$$ where the minimum is extended over all $N$-element point sets $\cP$ in $[0,1)^d$. This quantity is then compared with the initial discrepancy ${\rm disc}_{p,\bsgamma}(0,d)$, which is the weighted $L_p$-discrepancy of the empty point set. It is well known and easy to check that for the initial weighted $L_p$-discrepancy for $p \in [1,\infty)$ we have 
\begin{equation}\label{initdisc}
{\rm disc}_{p,\bsgamma}(0,d)=\prod_{j=1}^d\left(1+ \frac{\gamma_j^{p/2}}{p+1}\right)^{1/p}.
\end{equation}
See, e.g., \cite{SW98} for $p=2$ and \cite{LP03} for general finite $p$.

It is reasonable to ask for the minimal number $N$ of nodes that is necessary in order to achieve that the $N$-th minimal weighted $L_p$-discrepancy is smaller than $\varepsilon$ times ${\rm disc}_{p,\bsgamma}(0,d)$ for a threshold $\varepsilon \in (0,1)$. This quantity is called the inverse of the $N$-th minimal weighted $L_p$-discrepancy, which is, for $d \in \N$ and $\varepsilon \in (0,1)$, formally defined as $$N_{p,\bsgamma}^{{\rm disc}}(\varepsilon,d):=\min\{N \in \N \ : \ {\rm disc}_{p,\bsgamma}(N,d) \le \varepsilon \ {\rm disc}_{p,\bsgamma}(0,d)\}.$$ In a more general context, see Section~\ref{sec:int}, the inverse of the weighted discrepancy is known as ``information complexity''.

The question is now how fast $N_{p,\bsgamma}^{{\rm disc}}(\varepsilon,d)$ increases, when $d \rightarrow \infty$ and $\varepsilon \rightarrow 0$. Such  questions are typically studied in the field ``Information Based Complexity''. For general information on this and related questions we refer to the trilogy \cite{NW08,NW10,NW12}. In this context, the weighted $L_p$-discrepancy is said to be intractable, if $N_{p,\bsgamma}^{{\rm disc}}(\varepsilon,d)$ has the unfavourable property to grow at least exponentially fast in $d$ and/or $\varepsilon^{-1}$ for $d \rightarrow \infty$ and $\varepsilon \rightarrow 0$. If $N_{p,\bsgamma}^{{\rm disc}}(\varepsilon,d)$ grows at least exponentially fast in $d$ for a fixed $\varepsilon >0$, more precisely, if there exists a real $C>1$ such 
that $N_{p,\bsgamma}^{{\rm disc}}(\varepsilon,d) \ge C^d$ for infinitely 
many $d \in \N$, then the weighted $L_p$-discrepancy is said to suffer from the curse of dimensionality.

If, on the other hand, $N_{p,\bsgamma}^{{\rm disc}}(\varepsilon,d)$ grows at most sub-exponentially fast in $d$ and $\varepsilon^{-1}$ for $d \rightarrow \infty$ and $\varepsilon \rightarrow 0$, then the weighted $L_p$-discrepancy is said to be tractable and there are various notions of tractability in order to classify the grow rate of $N_{p,\bsgamma}^{{\rm disc}}(\varepsilon,d)$ more accurately. We consider the three most important ones. The weighted $L_p$-discrepancy is said to be
\begin{itemize}
\item weakly tractable, if $$\lim_{d+\varepsilon^{-1}\rightarrow \infty}\frac{\log N_{p,\bsgamma}^{{\rm disc}}(\varepsilon,d)}{d+\varepsilon^{-1}}=0;$$
\item polynomially tractable, if there exist numbers $C,\sigma>0$ and $\tau \ge 0$ such that $$N_{p,\bsgamma}^{{\rm disc}}(\varepsilon,d)\le C d^{\tau} \varepsilon^{- \sigma}\quad \mbox{for all $\varepsilon \in (0,1)$ and $d \in \N$;}$$
\item strongly polynomially tractable, if there exist numbers $C,\sigma>0$ such that 
\begin{equation}\label{def:SPT}
N_{p,\bsgamma}^{{\rm disc}}(\varepsilon,d)\le C \varepsilon^{- \sigma}\quad \mbox{for all $\varepsilon \in (0,1)$ and $d \in \N$.}
\end{equation}
The infimum of all $\sigma>0$ such that a bound of the form \eqref{def:SPT} holds is called the $\varepsilon$-exponent of strong polynomial tractability.
\end{itemize}

The problem is to characterize the weight sequences $\bsgamma$ with respect to intractability or tractability and, in the latter case, with respect to the specific notions of tractability of the $\bsgamma$-weighted $L_p$-discrepancy. 

For the $L_2$-case, i.e., $p=2$, the situation is well understood. Let $\bsgamma=(\gamma_j)_{j \ge 1}$ be a sequence of coordinate weights. It is shown in \cite{SW98} that the $\bsgamma$-weighted $L_2$-discrepancy is
\begin{itemize}
 \item polynomially tractable, if and only if $\limsup_{d \rightarrow \infty}\frac{1}{\log d}\sum_{j=1}^d \gamma_j < \infty$;
 \item strongly polynomially tractable, if and only if $\sum_{j=1}^{\infty} \gamma_j < \infty$. In this case, the $\varepsilon$-exponent of strong polynomial tractability is at most 2. 
\end{itemize}
The notion of weak tractability was introduced only several years after the
publication of \cite{SW98}. However, it follows easily from the results in \cite{SW98} that the weighted $L_2$-discrepancy is weakly tractable, if and only if $\lim_{d \rightarrow \infty}\frac{1}{d}\sum_{j=1}^d \gamma_j=0$.

For $p$ different from 2 we only have partial results consisting of sufficient conditions for even integer $p$. It was shown in \cite{LP03} that for even integer $p$ the weighted $L_p$-discrepancy is
\begin{itemize}
 \item polynomially tractable, if $\limsup_{d \rightarrow \infty}\frac{1}{\log d}\sum_{j=1}^d \gamma_j^{p/2} < \infty$;
 \item strongly polynomially tractable, if $\sum_{j=1}^{\infty} \gamma_j^{p/2} < \infty$. In this case, the $\varepsilon$-exponent of strong polynomial tractability is at most 2. 
\end{itemize}
Again, it follows easily from the results in \cite{LP03} that for even $p$ the weighted $L_p$-discrepancy is weakly tractable, if $\lim_{d \rightarrow \infty}\frac{1}{d}\sum_{j=1}^d \gamma_j^{p/2}=0$.\\

So for every even integer $p$ we have sufficient conditions for the common tractability notions. In this paper we prove necessary conditions for tractability of weighted $L_p$-discrepancy. Like in \cite{LP03,SW98} we restrict ourselves to weights $\gamma_j \in (0,1]$. This simplifies the presentation at some places and is furthermore not a big restriction, because we know from \cite{NP24b} that in the ``unweighted'' case (i.e., $\gamma_j\equiv 1$) the curse of dimensionality is present. Our main result is the following:

\begin{thm}\label{thm1}
Let $p \in (1,\infty)$ and $\bsgamma=(\gamma_j)_{j \ge 1}$ be a sequence of coordinate weights in $(0,1]$. A necessary condition for any notion of tractability of the $\bsgamma$-weighted $L_p$-discrepancy is $\lim_{j \rightarrow \infty}\gamma_j=0$. In particular, a necessary condition for
\begin{itemize}
 \item strong polynomial tractablity is 
 \begin{equation}\label{cond:SPT}
 \sum_{j=1}^{\infty}\gamma_j^{p/2}<\infty;
 \end{equation}
 \item polynomial tractablity is 
 \begin{equation}\label{cond:PT}
 \limsup_{d \rightarrow \infty}\frac{1}{\log d}\sum_{j=1}^d \gamma_j^{p/2} < \infty;
 \end{equation}
 \item weak tractability is 
 \begin{equation}\label{cond:WT}
 \lim_{d \rightarrow \infty}\frac{1}{d}\sum_{j=1}^d \gamma_j^{p/2}=0.
 \end{equation}
\end{itemize}
\end{thm}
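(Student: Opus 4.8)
The plan is to deduce everything from the link between weighted discrepancy and numerical integration, combined with the general lower bound for positive quadrature formulas on tensor product spaces announced in the abstract.

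\emph{Step 1 (passage to integration).} Put $q=p/(p-1)$. By the weighted $L_p$--$L_q$ Koksma--Hlawka inequality, for every $N$-point set $\cP$ in $[0,1)^d$ one has
\[
L_{p,\bsgamma,N}(\cP)\ =\ \sup\ \Bigl|\int_{[0,1]^d} f(\bsx)\,\rd\bsx-\frac1N\sum_{k=1}^N f(\bsx_k)\Bigr|,
\]
the supremum being over the unit ball of the weighted tensor product space $F_{d,\bsgamma,q}$ whose univariate factor is the space of functions on $[0,1]$ with derivative in $L_q[0,1]$, carrying the seminorm $\|g'\|_{L_q[0,1]}$, and whose ANOVA components satisfy $\|f\|^q=\sum_{\emptyset\neq\uu\subseteq[d]}\gamma_{\uu}^{-q/2}\,\|\partial^{|\uu|}_{\bst_{\uu}}f_{\uu}\|_{L_q}^q$. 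In particular ${\rm disc}_{p,\bsgamma}(0,d)$ is the norm of the integration functional on $F_{d,\bsgamma,q}$, which recovers \eqref{initdisc}, and ${\rm disc}_{p,\bsgamma}(N,d)$ is bounded from below by the $N$-th minimal worst-case error of \emph{arbitrary positive quadrature formulas} for $F_{d,\bsgamma,q}$, since equal-weight (QMC) rules form a subclass.

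\emph{Step 2 (the univariate datum and the general lower bound).} The univariate integration functional on that factor is extremised by $g(t)\propto 1-t^p$, and truncating this function so that it vanishes on an interval $[a,1]$ (keeping it nonnegative) only multiplies the ratio $\int_0^1 g\,/\,\|g'\|_{L_q}$ by $a^{(p+1)/p}$. Such a ``decomposable'' univariate ingredient --- a nonnegative function vanishing on a fixed subinterval with almost extremal $L_q$-profile --- is exactly what the general positive-quadrature lower bound consumes. Feeding it and the weights $\gamma_1,\dots,\gamma_d$ into that result yields constants $a_p,c_p\in(0,1)$ (depending only on $p$) with
\[
{\rm disc}_{p,\bsgamma}(N,d)\ \ge\ c_p\,{\rm disc}_{p,\bsgamma}(0,d)\qquad\text{whenever}\qquad N<\prod_{j=1}^d\bigl(1+a_p\,\gamma_j^{p/2}\bigr),
\]
so that $\log N^{{\rm disc}}_{p,\bsgamma}(\varepsilon,d)\ \ge\ \sum_{j=1}^d\log\bigl(1+a_p\gamma_j^{p/2}\bigr)\ \asymp_p\ \sum_{j=1}^d\gamma_j^{p/2}$ for every fixed $\varepsilon<c_p$.

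\emph{Step 3 (reading off the conditions).} From this the three named obstructions are routine. If the discrepancy were strongly polynomially tractable, then $N^{{\rm disc}}_{p,\bsgamma}(\varepsilon,d)\le C\varepsilon^{-\sigma}$ would bound $\sum_{j=1}^d\gamma_j^{p/2}$ uniformly in $d$, giving \eqref{cond:SPT}; polynomial tractability, $N^{{\rm disc}}_{p,\bsgamma}(\varepsilon,d)\le Cd^\tau\varepsilon^{-\sigma}$, would force $\sum_{j=1}^d\gamma_j^{p/2}=O(\log d)$, i.e.\ \eqref{cond:PT}; weak tractability, $\log N^{{\rm disc}}_{p,\bsgamma}(\varepsilon,d)=o(d)$, would force $\frac1d\sum_{j=1}^d\gamma_j^{p/2}\to0$, i.e.\ \eqref{cond:WT}. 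For the qualitative claim $\lim_j\gamma_j=0$: if it fails, fix $\delta>0$ and a subsequence with $\gamma_{j_k}\ge\delta$; then already $\sum_j\gamma_j^{p/2}=\infty$ excludes strong polynomial tractability, and for the weaker notions one restricts the sum defining $L_{p,\bsgamma,N}$ in \eqref{def:wLpdisc} to subsets of $\{j_1,j_2,\dots\}$, which bounds ${\rm disc}_{p,\bsgamma}(N,d)$ below by the (essentially unweighted, constant-weight $\delta$) $L_p$-discrepancy in growing dimension, which suffers from the curse of dimensionality by \cite{NP24b}.

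\emph{Main obstacle.} The hard part is the general positive-quadrature lower bound of Step 2. The naive attempt --- a single product ``fooling function'' $f=\prod_j g_j$ vanishing at all nodes --- is far too weak: each $g_j$ must dip to zero, hence has $\|g_j'\|_{L_q}$ bounded below, and in the weighted norm this derivative is penalised by $\gamma_j^{-q/2}$, so one only gets ${\rm disc}_{p,\bsgamma}(N,d)\gtrsim(\mathrm{const})^d\prod_j\gamma_j^{1/2}$, which is swamped by ${\rm disc}_{p,\bsgamma}(0,d)\asymp\exp\bigl(c_p\sum_j\gamma_j^{p/2}\bigr)$ precisely in the interesting regime $\gamma_j\to0$. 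One therefore has to average over a whole family of product bump functions with independently shifted supports, using a covering/pigeonhole estimate to guarantee that, as long as $N$ stays below the threshold, some member of the family is annihilated by all nodes, and to organise the bookkeeping so that it respects the ANOVA decomposition and the surviving bound is a genuine product $\prod_j(1+a_p\gamma_j^{p/2})^{-1}\,{\rm disc}_{p,\bsgamma}(0,d)$ rather than an exponentially smaller $(\mathrm{const})^d$. Two further technical points are that the univariate bumps cannot be translated completely freely (they must stay within the anchored $L_q$-space, which affects the covering probability near the endpoint) and that arbitrary nonnegative quadrature weights must be handled, not just the weights $1/N$ of QMC rules.
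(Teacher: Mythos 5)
Your overall architecture is the paper's: pass from the weighted $L_p$-discrepancy to integration in the anchored space $F_{d,\bsgamma,q}$ with $1/p+1/q=1$, prove a lower bound for positive quadrature formulas of the product form $\prod_{j=1}^d(1+a_p\gamma_j^{p/2})$, and read off the three necessary conditions by taking logarithms (your Step 3 is essentially the paper's proof of Corollary~\ref{cor2}). But the entire content of the theorem lives in the lower bound of your Step 2, which you do not prove; you name it as the ``main obstacle'' and only sketch a strategy. That strategy --- a family of product bump functions with independently shifted supports plus a covering/pigeonhole argument producing one function annihilated by all nodes --- is not the paper's argument, and it runs into exactly the wall you yourself describe: \emph{any} fooling function that vanishes at all $N$ nodes must, in each coordinate, dip to zero on a short interval, which in the weighted norm costs $\gamma_j^{-1/2}$ times a positive power of the inverse width, so its integral-to-norm ratio is at best $(\mathrm{const})^d\prod_j\gamma_j^{1/2}$. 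Averaging over shifts only helps you locate an empty cell; it does not remove this per-coordinate penalty. So the sketch does not close the gap.

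The paper's mechanism (Theorem~\ref{thm:gen}) is different in kind and uses no functions vanishing at the nodes. Given nodes $\bsx_1,\dots,\bsx_N$ and nonnegative quadrature weights, one builds for each node a \emph{nonnegative} product function $P_i(\bsx)=\prod_j s^{(j)}_{x_{i,j}}(x_j)$ agreeing with the worst-case function $h_d$ at $\bsx_i$, and sets $f^*=\sum_{i=1}^N P_i$. Positivity of the weights gives $A_{d,N}(f^*)\ge A_{d,N}(h_d)$, while $\|f^*\|_d\le N\prod_j\alpha_j$ and $I_d(f^*)\le N\prod_j\beta_j$ grow only linearly in $N$; playing the two test functions $h_d$ and $f^*$ against each other forces $N\gtrsim\min\bigl(\|h_d\|_d/\prod_j\alpha_j,\ I_d(h_d)/\prod_j\beta_j\bigr)$. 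The product-form threshold then comes from a univariate construction: $1-(1-x)^p$ is split into a piecewise-linear part plus two pieces supported on $[0,a]$ and $[a,1]$ with $a=1-2^{-1/(p+1)}$, and $s_y^{(j)}$ retains only the piece whose support contains $y$, so that $\alpha_j$ and $\beta_j$ undercut $\|h^{(j)}\|$ and $I_1(h^{(j)})$ by a factor of roughly $1+\tau_p\gamma_j^{p/2}$ per coordinate. Until you supply an argument of comparable strength, Step 2 is a genuine gap. Two smaller points: the norm in your Step 1 must include the $\uu=\emptyset$ term $|f(\bszero)|^q$, otherwise the integration functional is unbounded on the unit ball and the initial error is not \eqref{initdisc}; and reducing ``$\gamma_j\not\to 0$'' to the unweighted curse along a subsequence of coordinates does not by itself contradict weak (or polynomial) tractability when that subsequence is sparse in $[d]$.
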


This result will follow from a lower bound on the inverse of the weighted $L_p$-discrepancy, which in turn follows from a more general result about the integration problem in the weighted anchored Sobolev space with a $q$-norm that will be introduced and discussed in the following Section~\ref{sec:int}. This result will be stated as Corollary~\ref{cor2}.  \\

Theorem~\ref{thm1} generalizes the necessary conditions from \cite{SW98} for $p=2$ to general $p \in (1, \infty)$ and complements the sufficient conditions from \cite{LP03} for even integer $p$ with matching necessary conditions;
the case $p=1$ is still open. Beyond that, we prove also sufficient conditions 
for polynomial- and weak tractability which match the corresponding 
necessary conditions for {\it every} $p \in (1,\infty)$, rather than 
only for even $p$. Matching sufficient conditions for strong polynomial 
tractability for every $p$ beyond even integers are still open. 
Combining Theorem~\ref{thm1} with these results we obtain the following characterizations. 

\begin{cor}\label{cor1}
For coordinate weights $\bsgamma=(\gamma_j)_{j \ge 1}$ in $(0,1]$ we have:
\begin{itemize}
 \item If $p$ is even, then the $\bsgamma$-weighted $L_p$-discrepancy 
 is strongly polynomially tractable, if and only if \eqref{cond:SPT} holds.
 \item For every $p \in (1,\infty)$ the $\bsgamma$-weighted $L_p$-discrepancy 
 is polynomially tractable, if and only if \eqref{cond:PT} holds. 
 Sufficiency of \eqref{cond:PT} for polynomial tractability holds even for $p=1$.
 \item For every $p \in (1,\infty)$ the $\bsgamma$-weighted $L_p$-discrepancy 
 is weakly tractable, if and only if \eqref{cond:WT} holds. Sufficiency of \eqref{cond:WT} for weak tractability holds even for $p=1$.
\end{itemize}
\end{cor}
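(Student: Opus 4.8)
The plan is to get all three equivalences by combining Theorem~\ref{thm1}, which supplies every ``only if'' direction at once, with the corresponding ``if'' directions. For the first bullet the needed sufficiency -- that \eqref{cond:SPT} implies strong polynomial tractability for even integer $p$ -- is already contained in \cite{LP03}, so the substantive task is to prove, for every $p\in[1,\infty)$, that \eqref{cond:PT} implies polynomial tractability and that \eqref{cond:WT} implies weak tractability of the $\bsgamma$-weighted $L_p$-discrepancy.

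I would do this by comparing the $L_p$-problem with the $L_2$-problem for the rescaled weight sequence $\bsgamma'=(\gamma_j^{p/2})_{j\ge1}$, which again lies in $(0,1]$; the point is that the sums governing the $L_2$-tractability criteria of \cite{SW98} are exactly the sums $\sum_{j\le d}\gamma_j^{p/2}$ occurring in \eqref{cond:PT} and \eqref{cond:WT}. The only analytic ingredient is $|\Delta_{\cP}((\bst_{\uu},1))|\le 1$. When $p\ge 2$ this gives $\int_{[0,1]^{|\uu|}}|\Delta_{\cP}((\bst_{\uu},1))|^p\rd\bst_{\uu}\le\int_{[0,1]^{|\uu|}}|\Delta_{\cP}((\bst_{\uu},1))|^2\rd\bst_{\uu}$ for each $\uu$, and multiplying by $\gamma_{\uu}^{p/2}=\prod_{j\in\uu}\gamma_j^{p/2}$ and summing over $\emptyset\neq\uu\subseteq[d]$ gives, for every point set $\cP$, $L_{p,\bsgamma,N}(\cP)^p\le L_{2,\bsgamma',N}(\cP)^2$. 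When $1\le p<2$, Jensen's inequality on the probability space $[0,1]^{|\uu|}$ gives $\int|\Delta_{\cP}((\bst_{\uu},1))|^p\rd\bst_{\uu}\le\big(\int|\Delta_{\cP}((\bst_{\uu},1))|^2\rd\bst_{\uu}\big)^{p/2}$, and then Hölder's inequality over the subsets $\uu$ with exponents $2/p$ and $2/(2-p)$, run with the auxiliary weights $\gamma_j^{p/2}$, yields
\begin{equation*}
L_{p,\bsgamma,N}(\cP)^p\le L_{2,\bsgamma',N}(\cP)^p\,\Big(\prod_{j=1}^d\big(1+\gamma_j^{p/2}\big)\Big)^{(2-p)/2}.
\end{equation*}
The choice of auxiliary weights is what makes the Hölder correction a product over the coordinates -- hence at most $e^{\sum_{j\le d}\gamma_j^{p/2}}$ -- rather than something of size $2^d$.

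To finish I would feed in the elementary mean-square estimate for the $L_2$-problem obtained by averaging over $N$ independent uniform points, ${\rm disc}_{2,\bsgamma'}(N,d)^2\le\frac1N\prod_{j=1}^d\big(1+\gamma_j^{p/2}/2\big)$, together with \eqref{initdisc}, which gives ${\rm disc}_{p,\bsgamma}(0,d)\ge 1$. With $S_d:=\sum_{j=1}^d\gamma_j^{p/2}$ these inequalities combine to a bound of the form
\begin{equation*}
N_{p,\bsgamma}^{{\rm disc}}(\varepsilon,d)\le\big\lceil e^{c_p S_d}\,\varepsilon^{-\sigma_p}\big\rceil\qquad\text{for all }\varepsilon\in(0,1),\ d\in\N,
\end{equation*}
with $c_p>0$ and $\sigma_p=\max(2,p)$. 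Under \eqref{cond:PT} one has $S_d=O(\log d)$, so $e^{c_pS_d}=O(d^{\tau})$ for some $\tau\ge 0$ and the bound reads $O(d^{\tau}\varepsilon^{-\sigma_p})$: polynomial tractability. Under \eqref{cond:WT} one has $S_d=o(d)$, so $\log N_{p,\bsgamma}^{{\rm disc}}(\varepsilon,d)\le c_pS_d+\sigma_p\log\varepsilon^{-1}+1=o(d)+O(\log\varepsilon^{-1})$, and dividing by $d+\varepsilon^{-1}$ and letting $d+\varepsilon^{-1}\to\infty$ yields $0$: weak tractability. Both arguments apply unchanged when $p=1$, which takes care of the two ``$p=1$'' clauses; pairing them with Theorem~\ref{thm1} completes the corollary.

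The step I expect to be the main obstacle is making the non-even case go through at all: when $p$ is not an even integer, $\int|\Delta_{\cP}((\bst_{\uu},1))|^p\rd\bst_{\uu}$ is not a polynomial in the coordinates of $\cP$, so the explicit moment identities that power \cite{SW98,LP03} are unavailable and one must route everything through the $L_2$-problem. Two things need care: that the rescaling is precisely $\gamma_j\mapsto\gamma_j^{p/2}$ (any other exponent would spoil the match with \eqref{cond:PT} and \eqref{cond:WT}), and that for $1\le p<2$ the sum over the $2^d$ subsets $\uu$ is arranged so the correction factor stays at $e^{O(S_d)}$; weak tractability then survives exactly because \eqref{cond:WT} forces $S_d=o(d)$. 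The analogous question for strong polynomial tractability when $p$ is not an even integer is not part of the corollary.
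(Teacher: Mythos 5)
Your proposal is correct, but it reaches the sufficiency statements by a genuinely different route from the paper. The paper bounds $L_{p,\bsgamma,N}(\cP)$ by the weighted star-discrepancies of the projections $\cP_{\uu}$ and then invokes the result from \cite[Proof of Theorem~1]{HPS08} that there exist point sets with $D_N^{\ast}(\cP_{\uu})\le C\max(1,\sqrt{\log d})\sqrt{|\uu|/N}$ simultaneously for all $\uu$; this yields the bound \eqref{ubd:infdisc1}, i.e.\ $N_{p,\bsgamma}^{{\rm disc}}(\varepsilon,d)\lesssim \varepsilon^{-2}\log d\,\prod_{j}(1+2^{p/2}\gamma_j^{p/2})^{2/p}$, with $\varepsilon$-exponent $2$ uniformly in $p$ but with a $\log d$ factor. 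You instead reduce to the $L_2$-problem with rescaled weights $\gamma_j^{p/2}$ — via the pointwise bound $|\Delta_{\cP}|\le 1$ for $p\ge 2$, and via Jensen plus a correctly weighted H\"older step (with exponents $2/p$ and $2/(2-p)$ and auxiliary weights $\gamma_j^{p/2}$, so the correction stays a coordinate-wise product) for $1\le p<2$ — and then use only the elementary Monte Carlo average ${\rm disc}_{2,\bsgamma'}(N,d)^2\le N^{-1}\prod_j(1+\gamma_j^{p/2}/2)$ from \cite{SW98}. I checked the exponent bookkeeping in both regimes and it is consistent; your bound $N_{p,\bsgamma}^{{\rm disc}}(\varepsilon,d)\le\lceil e^{c_pS_d}\varepsilon^{-\max(2,p)}\rceil$ suffices for polynomial and weak tractability exactly as you argue, and the necessity directions are correctly delegated to Theorem~\ref{thm1} and the even-$p$ sufficiency to \cite{LP03}. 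The trade-off: your argument is more elementary (no appeal to the deep simultaneous-projection star-discrepancy result) and carries no $\log d$ factor, at the price of the worse $\varepsilon$-exponent $\max(2,p)$ for $p>2$. One consequence you should double-check before relying on it elsewhere: because your bound has no $\log d$ term, under \eqref{cond:SPT} it would immediately give \emph{strong} polynomial tractability for every $p\in[1,\infty)$ with $\varepsilon$-exponent at most $\max(2,p)$ — something the paper explicitly lists as open for non-even $p$ (and which its own estimate \eqref{ubd:infdisc1} cannot deliver). You rightly refrain from claiming this since it is not part of the corollary, but the tension with the paper's open-problem statement is worth flagging: either you have a genuinely stronger (if exponent-suboptimal) sufficiency result, or there is a subtlety you and I are both missing; I could not find one.
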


\begin{rem}\rm
It is worth to remark that the conditions for tractability are 
much more relaxed in the case of weighted $L_{\infty}$-discrepancy, 
better known as weighted star-discrepancy. There are a lot of papers dealing with sufficient conditions for strong polynomial tractability, see \cite{Aist2,DGPW, DLP05,DNP06,HPS08,HPT19}. The currently mildest condition on the weights $\bsgamma=(\gamma_j)_{j \ge 1}$ can be found in the paper \cite{Aist2} by Aistleitner who showed that if 
\begin{equation}\label{condA}
\sum_{j=1}^\infty \exp(-c \gamma_j^{-2}) < \infty
\end{equation}
for some $c > 0$, then there is a constant $C_{\bsgamma}>0$ such that 
$${\rm disc}_{\infty,\bsgamma}(N,d) \le \frac{C_{\bsgamma}}{\sqrt{N}} \quad \mbox{for all $d,N \in \N$.}$$ 
Consequently, the weighted star-discrepancy for such weights is strongly polynomially tractable, with $\varepsilon$-exponent at most 2. A typical sequence $\bsgamma=(\gamma_j)_{j \ge 1}$ satisfying condition \eqref{condA} is $\gamma_j=\widehat{c}/\sqrt{\log j}$ for some $\widehat{c}>0$ for $j>1$ (put $\gamma_1=1$). 
For such weights we can at best achieve weak tractability for the weighted $L_p$-discrepancy 
for finite $p$.

The observed difference corresponds to the different behavior of classical $L_p$-star discrepancy for $p \in (1,\infty)$ and $p=\infty$, which suffers from the curse of dimensionality if $p \in (1,\infty)$, see \cite{NP24}, and which is polynomially tractable if $p=\infty$, see \cite{hnww}.
\end{rem}

\section{Relation to numerical integration}\label{sec:int}

It is known that the weighted $L_p$-discrepancy is related to multivariate integration (see, for example, \cite{LP03,KPW21,NW10,SW98}). We briefly summarize the most important facts needed for the present paper.

From now on let $p,q \ge 1$ be H\"older conjugates, i.e., $1/p+1/q=1$. Let $W_q^1([0,1])$ be the space of absolutely continuous functions whose first derivatives belong to the space $L_q([0,1])$. For a generic weight $\gamma>0$ equip this space with the norm 
\begin{equation}\label{norm1}
\|f\|_{1,\gamma,q}:=\left(|f(0)|^q+\frac{1}{\gamma^{q/2}} \int_0^1 |f'(t)|^q \rd t\right)^{1/q}
\end{equation}
and let $$F_{1,\gamma,q}:=\{f \in W_q^{1}([0,1])\ : \ \|f\|_{1,\gamma,q}< \infty\}.$$

For dimension $d>1$ and a positive sequence of coordinate weights $\bsgamma=(\gamma_j)_{j \ge 1}$ consider the $d$-fold tensor product space $$F_{d,\bsgamma,q}=F_{1,\gamma_1,q} \otimes F_{1,\gamma_2,q} \otimes \ldots \otimes F_{1,\gamma_d,q}.$$ This space can be described as the space of all functions from the $d$-fold tensor product space $$W_q^{(1,1,\ldots,1)}([0,1]^d)=W_q^1([0,1])\otimes W_q^1([0,1]) \otimes \ldots \otimes W_q^1([0,1]),$$ which is the Sobolev space of functions on $[0,1]^d$ that are once differentiable in each variable and whose first derivative $\partial^d f/\partial \bsx$ has finite $L_q$-norm, where $\partial \bsx=\partial x_1 \partial x_2 \ldots \partial x_d$, and which is equipped with the tensor-product norm 
\begin{equation}\label{def:normd}
\|f\|_{d,\bsgamma,q}=\left(\sum_{\uu \subseteq [d]} \frac{1}{\gamma_{\uu}^{q/2}} \int_{[0,1]^{|\uu|}} \left| \frac{\partial^{|\uu|}}{\partial \bst_{\uu}}f_{\uu}((\bst_{\uu},0)) \right|^q \rd \bst_{\uu}\right)^{1/q},
\end{equation}
where $(\bst_{\uu},0)$ is defined analogously to $(\bst_{\uu},1)$ with $1$ replaced by $0$.

Consider multivariate integration $$I_d(f):=\int_{[0,1]^d} f(\bsx) \rd \bsx \quad \mbox{for $f \in F_{d,\bsgamma,q}$}.$$
We approximate the integrals $I_d(f)$ by linear algorithms of the form 
\begin{equation}\label{def:linAlg}
A_{d,N}(f)=\sum_{k=1}^N a_k f(\bsx_k),
\end{equation}
where $\bsx_1,\bsx_2,\ldots,\bsx_N$ are in $[0,1)^d$ and $a_1,a_2,\ldots,a_N$ are real weights that we call integration weights. If $a_1=a_2=\ldots =a_N=1/N$, then the linear algorithm \eqref{def:linAlg} is a so-called quasi-Monte Carlo algorithm, and we denote this by $A_{d,N}^{{\rm QMC}}$.

The worst-case error of an algorithm \eqref{def:linAlg} is defined by 
\begin{equation}\label{eq:wce}
e(F_{d,\bsgamma,q},A_{d,N})=\sup_{f \in F_{d,\bsgamma,q}\atop \|f\|_{d,\bsgamma,q}\le 1} \left|I_d(f)-A_{d,N}(f)\right|.
\end{equation}
For a quasi-Monte Carlo algorithm $A_{d,N}^{{\rm QMC}}$ it is known that 
$$
e(F_{d,\bsgamma,q},A_{d,N}^{{\rm QMC}})= L_{p,\bsgamma,N}(\overline{\cP}),
$$ 
where $L_{p,\bsgamma,N}(\overline{\cP})$ is the weighted $L_p$-discrepancy of the point set
\begin{equation}\label{def:oP}
\overline{\cP}=\{\boldsymbol{1} - \bsx_k \ : \ k\in \{1,2,\ldots,N\}\},
\end{equation}
where $\boldsymbol{1} - \bsx_k$ is defined as the component-wise difference of the vector containing only ones and $\bsx_k$, see, e.g., \cite[Section~1]{LP03}, \cite[Section~3]{SW98} and \cite[Section~9.5.1]{NW10}. We mention that we have chosen to anchor the norms \eqref{norm1} and \eqref{def:normd}, respectively, in $0$ whereas in \cite{LP03} and \cite{SW98} the anchor 1 is used. This change is not a big deal and results in the appearence of $\overline{\cP}$ in the above results rather than directly the point set $\cP$. 

For general linear algorithms  \eqref{def:linAlg} the worst-case error is the so-called generalized weighted $L_p$-discrepancy. Here, for points $\cP=\{\bsx_1,\bsx_2,\ldots,\bsx_N\}$ and corresponding coefficients $\cA=\{a_1,a_2,\ldots,a_N\}$ the discrepancy function is $$\overline{\Delta}_{\cP,\cA}(\bst)=\sum_{k=1}^N a_k {\bf 1}_{[\boldsymbol{0},\bst)}(\bsx_k) - t_1 t_2\cdots t_d$$ for $\bst=(t_1,t_2,\ldots,t_d)$ in $[0,1]^d$ and, for $p \in [1,\infty)$, the generalized $\bsgamma$-weighted $L_p$-discrepancy is
$$\overline{L}_{p,\bsgamma,N}(\cP,\cA):=\left(\sum_{\emptyset \not=\uu \subseteq [d]}\gamma_{\uu}^{p/2}\int_{[0,1]^{|\uu|}} |\overline{\Delta}_{\cP,\cA}((\bst_{\uu},1))|^p \rd \bst_{\uu}\right)^{1/p}.$$ If $a_1=a_2=\ldots=a_N=1/N$, then we are back to the classical definition of weighted $L_p$-discrepancy in Section~\ref{sec:intro}. 

Now a natural extension of the QMC-setting to arbitrary linear algorithms yields
$$
e(F_{d,\bsgamma,q},A_{d,N})= \overline{L}_{p,\bsgamma,N}(\overline{\cP},\cA),
$$ 
where $\overline{\cP}$ is like in \eqref{def:oP} and $\cA=\{a_1,a_2,\ldots,a_N\}$ consists of exactly the coefficients from the given linear algorithm \eqref{def:linAlg}. See \cite[Section~9.5.1]{NW10} for the case $p=2$.

From this point of view we now study the more general problem of numerical integration in the weighted space $F_{d,\bsgamma,q}$ rather than the $\bsgamma$-weighted $L_p$-discrepancy (which corresponds to quasi-Monte Carlo algorithms -- although with suitably ``reflected'' points). We consider linear algorithms where we restrict ourselves to non-negative weights $a_1,\ldots,a_N$ and hence QMC-algorithms 
are included in the present setting.

We define the $N$-th minimal worst-case error as $$e_{q,\bsgamma}(N,d):=\min_{A_{d,N}} |e(F_{d,\bsgamma,q},A_{d,N})|,$$ 
where the minimum is extended over all linear algorithms of the form \eqref{def:linAlg} based on $N$ function evaluations along points $\bsx_1,\bsx_2,\ldots,\bsx_N$ from $[0,1)^d$ and with non-negative weights $a_1,\ldots,a_N \ge 0$. Note that for all $d,N \in \N$ we have 
\begin{equation}\label{ine:errdisc}
e_{q,\bsgamma}(N,d) \le {\rm disc}_{p,\bsgamma}(N,d).
\end{equation} 

The initial error is $$e_{q,\bsgamma}(0,d)=\sup_{f \in F_{d,\bsgamma,q}\atop \|f\|_{d,\bsgamma,q}\le 1} \left|I_d(f)\right|.$$ 

We call a function $f \in F_{d,\bsgamma,q}$ a worst-case function, if $I_d(f/\|f\|_{d,\bsgamma,q})=e_{q,\bsgamma}(0,d)$.

\begin{lem}\label{le:interr}
Let $d \in \N$ and let $q \in (1,\infty]$ and $p \in [1,\infty)$ with $1/p+1/q=1$. Then we have $$e_{q,\bsgamma}(0,d)=\prod_{j=1}^d \left(1+\frac{\gamma_j}{p+1}\right)^{1/p}$$ and the worst-case function in $F_{d,\bsgamma,q}$ is given by $h_{d,\bsgamma}(\bsx)=h_{1,\gamma_1}(x_1) \cdots h_{1,\gamma_d}(x_d)$ for $\bsx=(x_1,\ldots,x_d) \in [0,1]^d$, where $h_{1,\gamma}(x)=1+\gamma \frac{1-(1-x)^p}{p}$. Furthermore, we have 
\begin{equation}\label{wcfct:intnor}
\int_0^1 h_{1,\gamma}(t)\rd t= 1+\frac{\gamma^{p/2}}{p+1}\quad \mbox{ and } \quad \|h_{1,\gamma}\|_{1,\gamma,q}= \left(1+\frac{\gamma^{p/2}}{p+1}\right)^{1/q}.
\end{equation}
\end{lem}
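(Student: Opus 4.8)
The plan is to prove the three assertions of Lemma~\ref{le:interr} by exploiting the tensor product structure of the space $F_{d,\bsgamma,q}$, so that everything reduces to a one-dimensional computation for the factor space $F_{1,\gamma,q}$. First I would recall that for a tensor product of Hilbert-like (here Banach) function spaces equipped with the cross-norm \eqref{def:normd}, the worst-case error functional for a tensor product linear functional such as $I_d = I_1\otimes\cdots\otimes I_1$ multiplies across coordinates; concretely, the initial error satisfies $e_{q,\bsgamma}(0,d)=\prod_{j=1}^d e_{q,\gamma_j}(0,1)$ and the worst-case function is the tensor product of the univariate worst-case functions, $h_{d,\bsgamma}=\bigotimes_{j=1}^d h_{1,\gamma_j}$. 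This is a standard fact, but to be safe I would give a short direct argument: for a product function $f(\bsx)=\prod_j f_j(x_j)$ one checks from \eqref{def:normd} that $\|f\|_{d,\bsgamma,q}=\prod_j\|f_j\|_{1,\gamma_j,q}$ (the sum over $\uu\subseteq[d]$ factors as $\prod_j(1+(\text{$j$-th term}))$), and $I_d(f)=\prod_j I_1(f_j)$; then one argues that the supremum in the definition of $e_{q,\bsgamma}(0,d)$ is actually attained on product functions, e.g.\ by a Fubini/induction argument on the dimension or by appealing to the general tensor product principle for linear problems.

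With the reduction in hand, the core of the proof is the one-dimensional optimization: maximize $I_1(f)=\int_0^1 f(t)\rd t$ over $f\in F_{1,\gamma,q}$ with $\|f\|_{1,\gamma,q}\le 1$, where $\|f\|_{1,\gamma,q}^q=|f(0)|^q+\gamma^{-q/2}\int_0^1|f'(t)|^q\rd t$. Writing $f(t)=f(0)+\int_0^t f'(s)\rd s$ and using Fubini, $I_1(f)=f(0)+\int_0^1 (1-s)f'(s)\rd s$. So we are maximizing the linear functional $(a,g)\mapsto a+\int_0^1(1-s)g(s)\rd s$ subject to $|a|^q+\gamma^{-q/2}\|g\|_{L_q}^q\le 1$; this is a finite-plus-function-space Hölder extremal problem whose solution is explicit. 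The extremizer has $g(s)=f'(s)$ proportional to $\mathrm{sgn}(1-s)|1-s|^{q/(q-1)}\cdot(\text{constant})$, i.e.\ $f'(s)=c(1-s)^{p-1}$ with $p=q/(q-1)$, giving $f(t)=f(0)+\tfrac{c}{p}(1-(1-t)^p)$; matching the Lagrange/Hölder conditions pins down the ratio of $f(0)$ to $c$ so that, after normalizing, one obtains $h_{1,\gamma}(x)=1+\gamma\,\tfrac{1-(1-x)^p}{p}$ (note $h_{1,\gamma}(0)=1$, $h_{1,\gamma}'(x)=\gamma(1-x)^{p-1}$). The value of the maximum is then $e_{q,\gamma}(0,1)=\|h_{1,\gamma}\|_{1,\gamma,q}^{-1}I_1(h_{1,\gamma})$; a direct computation gives $I_1(h_{1,\gamma})=1+\tfrac{\gamma}{p}\cdot\tfrac{p}{p+1}=1+\tfrac{\gamma^{?}}{p+1}$ — here I must be careful with the exponent on $\gamma$: the claimed formulas in \eqref{wcfct:intnor} read $\int_0^1 h_{1,\gamma}=1+\gamma^{p/2}/(p+1)$ and $\|h_{1,\gamma}\|_{1,\gamma,q}=(1+\gamma^{p/2}/(p+1))^{1/q}$, which forces a consistency check of how the weight $\gamma$ enters, and is discussed below.

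The remaining steps are bookkeeping: compute $\|h_{1,\gamma}\|_{1,\gamma,q}$ directly from \eqref{norm1} using $h_{1,\gamma}(0)=1$ and $h_{1,\gamma}'(t)=\gamma(1-t)^{p-1}$, so $\int_0^1|h_{1,\gamma}'(t)|^q\rd t=\gamma^q\int_0^1(1-t)^{(p-1)q}\rd t=\gamma^q/((p-1)q+1)=\gamma^q/p$ (since $(p-1)q=p$), hence $\|h_{1,\gamma}\|_{1,\gamma,q}^q=1+\gamma^{-q/2}\gamma^q/p=1+\gamma^{q/2}/p$; then verify this equals $1+\gamma^{p/2}/(p+1)$ raised to the appropriate power, which will dictate the precise normalization constant I should put in front of the $(1-(1-x)^p)$ term — I expect the honest statement is that the normalized worst-case function is a scalar multiple of the displayed $h_{1,\gamma}$, and the lemma's $h_{1,\gamma}$ is the canonically normalized representative for which $\|h_{1,\gamma}\|=(1+\gamma^{p/2}/(p+1))^{1/q}$ holds by design. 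Finally, $e_{q,\gamma}(0,1)=I_1(h_{1,\gamma})/\|h_{1,\gamma}\|_{1,\gamma,q}=(1+\gamma^{p/2}/(p+1))^{1-1/q}=(1+\gamma^{p/2}/(p+1))^{1/p}$, and multiplying over $j=1,\dots,d$ yields the claimed $e_{q,\bsgamma}(0,d)=\prod_{j=1}^d(1+\gamma_j/(p+1))^{1/p}$ — again I must reconcile whether the exponent inside the product is $\gamma_j$ or $\gamma_j^{p/2}$, matching it against \eqref{initdisc} and inequality \eqref{ine:errdisc}.

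The main obstacle I anticipate is precisely this exponent-bookkeeping for the power of $\gamma$: tracking how the weight $\gamma$ (which appears as $\gamma^{-q/2}$ in the norm \eqref{norm1}) propagates through the Hölder extremal computation to produce $\gamma$ versus $\gamma^{p/2}$ versus $\gamma^{q/2}$ in the three displayed identities, and ensuring the univariate worst-case function is stated with exactly the right normalization so that all of $e_{q,\bsgamma}(0,d)$, $\int_0^1 h_{1,\gamma}$, and $\|h_{1,\gamma}\|_{1,\gamma,q}$ come out as written and are mutually consistent (and consistent with \eqref{initdisc} via \eqref{ine:errdisc}). The conceptual content — tensorization plus a one-variable Hölder duality argument — is routine; the risk is purely in the arithmetic of exponents, so I would double-check each identity by testing the special case $p=q=2$ against the known $L_2$ formulas before committing to the general-$p$ statement.
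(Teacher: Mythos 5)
Your strategy is exactly the paper's: reduce to $d=1$ via the cross-norm/tensor-product structure, write $I_1(f)=f(0)+\int_0^1 f'(t)(1-t)\,{\rm d}t$, and apply H\"older twice (once in $L_q\times L_p$ for the integral term, once discretely to combine it with $|f(0)|$) to obtain $e_{q,\gamma}(0,1)\le\bigl(1+\gamma^{p/2}\|1-t\|_{L_p}^p\bigr)^{1/p}=\bigl(1+\gamma^{p/2}/(p+1)\bigr)^{1/p}$, and then exhibit an extremal function. So there is no difference in approach, and your instinct that the exponents of $\gamma$ in the printed statement do not cohere is correct: the lemma as printed is internally inconsistent (the displayed $e_{q,\bsgamma}(0,d)$ with $\gamma_j$ contradicts both \eqref{wcfct:intnor} and the subsequent assertion that it equals ${\rm disc}_{p,\bsgamma}(0,d)$ from \eqref{initdisc}), and the paper's own proof only verifies the upper bound, leaving the check of \eqref{wcfct:intnor} as ``elementary'' when in fact it fails for the printed $h_{1,\gamma}$ unless $p=2$ or $\gamma=1$.

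Two corrections to your bookkeeping, however. First, $(p-1)q+1=p+1$, not $p$, so for $h_{1,\gamma}=1+\gamma\frac{1-(1-x)^p}{p}$ one gets $\int_0^1|h_{1,\gamma}'|^q\,{\rm d}t=\gamma^q/(p+1)$, hence $\|h_{1,\gamma}\|_{1,\gamma,q}^q=1+\gamma^{q/2}/(p+1)$ and $\int_0^1 h_{1,\gamma}=1+\gamma/(p+1)$. Second, and more importantly, your proposed repair --- taking ``a scalar multiple of the displayed $h_{1,\gamma}$'' --- cannot work: multiplying $h$ by a constant scales $I_1(h)$ and $\|h\|_{1,\gamma,q}$ by the same factor and so cannot convert either identity into the form claimed in \eqref{wcfct:intnor}. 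The correct fix comes out of the equality analysis you already set up: equality in the inner H\"older step forces $f'(t)=c(1-t)^{p-1}$, and equality in the outer (discrete) step with $f(0)=1$ forces $\gamma^{-q/2}\|f'\|_{L_q}^q=\gamma^{p/2}/(p+1)$, i.e.\ $c^q=\gamma^{(p+q)/2}=\gamma^{pq/2}$, so $c=\gamma^{p/2}$. Thus the extremal function is $h_{1,\gamma}(x)=1+\gamma^{p/2}\frac{1-(1-x)^p}{p}$ (coefficient $\gamma^{p/2}$, not $\gamma$), with which both identities in \eqref{wcfct:intnor} hold, $e_{q,\gamma}(0,1)=\bigl(1+\gamma^{p/2}/(p+1)\bigr)^{1/p}$, and the product formula agrees with \eqref{initdisc}. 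With that correction made explicit (and the routine verification that the cross-norm factors on product functions and that the supremum tensorizes), your argument is complete and coincides with the paper's.
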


\begin{proof}
Since we are dealing with tensor products of one-dimensional spaces it suffices to prove the result for $d=1$. Let $\gamma>0$ be a generic weight. For $f \in F_{1,\gamma,q}$ we have $$\int_0^1 f(x) \rd x = \int_0^1 \left(f(0)+\int_0^x f'(t) \rd t\right) \rd x = f(0)+\int_0^1 f'(t) g(t) \rd t,$$ where $g(t)=1-t$.  Applying H\"older's inequality we obtain 
\begin{align*}
\left|\int_0^1 f(x) \rd x \right| \le & |f(0)| + \|f\|_{1,q} \|g\|_{L_p}\\
= & |f(0)| + \frac{1}{\gamma^{1/2}} \|f\|_{1,q} \gamma^{1/2}\|g\|_{L_p}\\
\le & \|f\|_{1,\gamma,q} \left(1+\gamma^{p/2} \|g\|_{L_p}^p\right)^{1/p}.
\end{align*}
Hence 
\begin{equation}\label{init:bd}
\left|\int_0^1 \frac{f(x)}{\|f\|_{1,\gamma,q}} \rd x \right|
\le \left(1+\gamma^{p/2} \|g\|_{L_p}^p\right)^{1/p} = \left(1+\frac{\gamma^{p/2}}{p+1}\right)^{1/p}.
\end{equation}

Now consider $h_{1,\gamma}(x)=1+\gamma \frac{1-(1-x)^p}{p}$. It is elementary to check that \eqref{wcfct:intnor} holds true. Hence, with $f(x)=h_{1,\gamma}(x)$ we obtain equality in \eqref{init:bd}. Thus, $$e_{q,\gamma}(0,1)=\left(1+\frac{\gamma^{p/2}}{p+1}\right)^{1/p}$$ and $h_{1,\gamma}$ is a worst-case function.
\end{proof}

Note that for all H\"older conjugates $q \in (1,\infty]$ and $p \in [1,\infty)$ and for all $d \in \N$ we have $$e_{q,\bsgamma}(0,d)={\rm disc}_{p,\bsgamma}(0,d).$$

Now we define the information complexity as the minimal number of function evaluations necessary in order to reduce the initial error by a factor of $\varepsilon$. 
For $d \in \N$ and $\varepsilon \in (0,1)$ put 
$$N^{{\rm int}}_{q,\bsgamma}(\varepsilon,d):= \min\{N \in \N \ : \ e_{q,\bsgamma}(N,d) \le \varepsilon\ e_{q,\bsgamma}(0,d)\}.$$
We stress that $N^{{\rm int}}_{q,\bsgamma}(\varepsilon,d)$
is a kind of restricted complexity since we only allow positive quadrature formulas.

From \eqref{ine:errdisc}, \eqref{initdisc} and Lemma~\ref{le:interr} it follows that for all  H\"older conjugates $q \in (1,\infty]$ and $p \in [1,\infty)$ and for all $d \in \N$ and $\varepsilon \in (0,1)^d$ we have  $$N^{{\rm int}}_{q,\bsgamma}(\varepsilon,d) \le N^{{\rm disc}}_{p,\bsgamma}(\varepsilon,d).$$

Hence, Theorem~\ref{thm1} follows from the following more general results.

\begin{thm}\label{thm2}
For every $q$ in $(1,\infty)$ put
\begin{equation}\label{def:taup}
\tau_p:=  \frac{2p-(p+1)(1+2^{p/(p+1)}-2^{1/(p+1)})}{4 p^2+6p + (p+1)(1+2^{p/(p+1)}-2^{1/(p+1)})},
\end{equation}
where $p$ is the H\"older conjugate of $q$. Then $\tau_p>0$ and for all $d \in \N$ and $\varepsilon \in (0,1/2)$ we have 
\begin{equation*}
N^{{\rm disc}}_{p,\bsgamma}(\varepsilon,d) \ge N_{q,\bsgamma}^{{\rm int}}(\varepsilon,d) \ge (1-2 \varepsilon) \prod_{j=1}^d \left(1+\tau_p \gamma_j^{p/2}\right)^{1/q}.
\end{equation*}
\end{thm}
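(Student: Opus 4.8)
\textbf{Proof plan for Theorem~\ref{thm2}.}
The plan is to establish a lower bound on the information complexity $N^{\rm int}_{q,\bsgamma}(\varepsilon,d)$ for positive quadrature formulas, which then dominates $N^{\rm disc}_{p,\bsgamma}(\varepsilon,d)$ by the chain of inequalities established just before the statement. The key idea is a standard ``bump function'' / ``fooling function'' argument exploiting the fact that positive quadrature weights cannot overshoot: given any node set $\bsx_1,\ldots,\bsx_N$ with non-negative weights summing to (essentially) the mass needed to integrate constants, one constructs a unit-norm function $f \in F_{d,\bsgamma,q}$ that vanishes at all the nodes but has large integral. Because the space is a tensor product of one-dimensional spaces, I would build $f$ as a product $f(\bsx)=\prod_{j=1}^d \varphi_j(x_j)$ (or a sum-of-products variant) where each $\varphi_j$ is a suitably normalized one-dimensional function in $F_{1,\gamma_j,q}$.

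The technical heart is the one-dimensional estimate. For a single coordinate with weight $\gamma$, and a point $y \in [0,1)$, I would consider functions $\varphi$ in $F_{1,\gamma,q}$ with $\varphi(y)=0$ and ask: how large can $\int_0^1 \varphi$ be relative to $\|\varphi\|_{1,\gamma,q}$? Writing $\varphi(0)$ and $\varphi'$ as free data subject to the constraint $\varphi(0)+\int_0^y \varphi'(t)\rd t = 0$, this becomes a constrained optimization: maximize $|\varphi(0) + \int_0^1 \varphi'(t)(1-t)\rd t|$ over $|\varphi(0)|^q + \gamma^{-q/2}\|\varphi'\|_{L_q}^q \le 1$ with the vanishing constraint. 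A natural sub-optimal but explicit choice is a piecewise function: take $\varphi$ to be affine (or to have $\varphi'$ constant on $[0,y]$ and on $[y,1]$ separately, with a jump balancing the constraint), compute the resulting integral and norm in closed form, and optimize over the worst-case node location $y$ (which by symmetry/monotonicity should be an endpoint, presumably $y$ near $1/2$ or $y=1/2$). The constant $\tau_p$ in \eqref{def:taup}, with its $2^{p/(p+1)}$ and $2^{1/(p+1)}$ terms, strongly suggests exactly this: the exponents $p/(p+1)$ and $1/(p+1)$ are what appear when one splits an $L_q$-norm-constrained quantity over two subintervals of lengths $y$ and $1-y$ and balances via Hölder, with the worst case at $y=1/2$. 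I would carry out this one-dimensional computation carefully to extract the largest $\tau_p$ for which the bound $\int_0^1 \varphi \ge \tau_p \gamma^{p/2} \cdot (\text{something})$ holds uniformly in the node, and verify $\tau_p>0$ (which amounts to checking $2p > (p+1)(1 + 2^{p/(p+1)} - 2^{1/(p+1)})$ for all $p>1$, a one-variable inequality).

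From the one-dimensional bound I would assemble the $d$-dimensional lower bound. The scheme: suppose $N < (1-2\varepsilon)\prod_{j=1}^d(1+\tau_p\gamma_j^{p/2})^{1/q}$ and derive a contradiction. Using that there are only $N$ nodes, a counting/averaging argument over the product structure (this is where ``positive weights'' is used — the quadrature value of the fooling function is controlled because all $a_k \ge 0$ and $\sum a_k$ is bounded, so the node contributions cannot conspire) produces a function $f$ of unit norm with $f$ vanishing at all nodes, hence $A_{d,N}(f)=0$, while $I_d(f) \ge (1-2\varepsilon)\, e_{q,\bsgamma}(0,d)/\|f\|$... more precisely I expect the clean route is: the worst-case error is at least $|I_d(f) - A_{d,N}(f)|/\|f\|_{d,\bsgamma,q} = |I_d(f)|/\|f\|_{d,\bsgamma,q}$, and one shows this exceeds $\varepsilon\, e_{q,\bsgamma}(0,d)$ whenever $N$ is below the stated threshold. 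The factor $(1-2\varepsilon)$ and the restriction $\varepsilon \in (0,1/2)$ point to a ``volume not covered by $N$ boxes'' argument: the $N$ nodes can force $f$ to vanish only on a region of measure at most proportional to $N/\prod(1+\tau_p\gamma_j^{p/2})$, and on the complement one gets a genuine lower bound on $I_d(f)$; the $2\varepsilon$ absorbs both the loss from the initial-error normalization and a slack term.

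\textbf{Expected main obstacle.} The delicate part is the one-dimensional extremal problem with the interpolation constraint $\varphi(y)=0$, done \emph{uniformly over the node location} $y$, and matching it to the precise algebraic form of $\tau_p$. In particular, confirming that the worst node position is $y=1/2$ (rather than an endpoint) and that the two-piece ansatz is good enough — i.e. that one is not leaving too much on the table relative to what the product bound $\prod(1+\tau_p\gamma_j^{p/2})^{1/q}$ asserts — requires care, since for $q\ne 2$ there is no Hilbert-space structure to fall back on and all normalizations must be tracked by hand through Hölder's inequality. A secondary subtlety is the transition from a bound on a single fooling function to the $d$-dimensional complexity lower bound with the correct product constant and the clean $(1-2\varepsilon)$ factor: one must arrange the construction (likely via a telescoping or coordinate-by-coordinate argument, or an averaging over a family of product fooling functions indexed by which ``side'' of each node they live on) so that the per-coordinate gain $(1+\tau_p\gamma_j^{p/2})^{1/q}$ compounds correctly rather than degrading with $d$.
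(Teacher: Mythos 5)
Your proposal rests on the classical fooling-function device: build $f$ with $f(\bsx_k)=0$ at every node, so that $A_{d,N}(f)=0$, and show $I_d(f)/\|f\|_{d,\bsgamma,q}>\varepsilon\, e_{q,\bsgamma}(0,d)$. This is not the mechanism of the paper's proof, and there is a genuine gap in it: if $f$ vanishes at all nodes then $A_{d,N}(f)=0$ \emph{regardless of the signs of the $a_k$}, so your argument would never actually use the positivity of the quadrature weights and would therefore prove the lower bound for \emph{arbitrary} linear rules. That is a much stronger statement than Theorem~\ref{thm2} claims (the paper repeatedly stresses that $N^{{\rm int}}_{q,\bsgamma}$ is a \emph{restricted} complexity), and it is exactly the statement that the known techniques cannot deliver in tensor product spaces: a single product function $\prod_j\varphi_j(x_j)$ can only vanish at a node if some factor vanishes at the corresponding coordinate, and forcing each $\varphi_j$ to vanish at up to $N$ points degrades its integral-to-norm ratio in a way that does not compound into the multiplicative bound $\prod_j(1+\tau_p\gamma_j^{p/2})^{1/q}$. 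Your appeal to ``positivity controls the quadrature value because $\sum a_k$ is bounded'' does not repair this, since that quantity plays no role once $f$ vanishes on the node set.

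The paper's actual route (Theorem~\ref{thm:gen}) uses \emph{two} test functions and exploits positivity through a domination, not a vanishing, argument. One takes the worst-case function $h_d$ and, for each node $\bsx_i$, a \emph{non-negative} product bump $P_i$ with $P_i(\bsx_i)=h_d(\bsx_i)$; setting $f^*=\sum_i P_i$ one gets $f^*(\bsx_k)\ge P_k(\bsx_k)=h_d(\bsx_k)$, hence $A_{d,N}(f^*)\ge A_{d,N}(h_d)$ precisely because all $a_k\ge 0$. Since $\|f^*\|_d\le N\bsalpha$ and $I_d(f^*)\le N\bsbeta$ while $I_d(h_d)$ is the full initial mass, the rule cannot integrate both $h_d$ and $f^*$ accurately unless $N\gtrsim\min(\|h_d\|_d/\bsalpha,\;I_d(h_d)/\bsbeta)$, which is where the product $\prod_j(1+\tau_p\gamma_j^{p/2})^{1/q}$ and the clean $(1-2\varepsilon)$ come from. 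Your one-dimensional intuition is partially on target — the exponents $2^{p/(p+1)}$ and $2^{1/(p+1)}$ do arise from splitting $[0,1]$ into two pieces and balancing two $q$-norm contributions — but the split is at the fixed point $a=1-2^{-1/(p+1)}$ (with a case distinction on whether the node $y$ lies left or right of $a$), and the one-dimensional extremal object is a non-negative function \emph{agreeing with} $h_{1,\gamma}$ at the node, not a function \emph{vanishing} there. To make your write-up into a proof you would need to replace the vanishing-at-nodes construction by this domination argument (or find a genuinely new way to beat signed rules, which the present theorem does not require and the paper does not attempt).
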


The proof of Theorem~\ref{thm2} will be given in Section~\ref{sec:proofs}. It will follow from a general lower bound on the information complexity of  the integration problem in (not necessarily homogeneous) linear tensor products of spaces of univariate functions for positive linear rules. This general result will be explained in Section~\ref{sec:genres}.

The notions of weak, polynomial and strong polynomial tractability for integration in $F_{d,\bsgamma,q}$
are defined in the same way as for the weighted $L_p$-discrepancy 
in Section~\ref{sec:intro} with the inverse discrepancy 
$N_{p,\bsgamma}^{{\rm disc}}(\varepsilon,d)$ replaced by the 
information complexity $N^{{\rm int}}_{q,\bsgamma}(\varepsilon,d)$. 
We obtain the following tractability conditions for the integration problem 
with positive quadrature formulas from Theorem~\ref{thm2}.
The proof of Corollary~\ref{cor2} will be given in Section~\ref{sec:proofs}.

\begin{cor}\label{cor2}
Let $q \in (1,\infty)$ and let $p$ denote the H\"older conjugate of $q$. 
For coordinate weights  
$\bsgamma=(\gamma_j)_{j \ge 1}$ in $(0,1]$ we have: 
\begin{itemize} 
\item
A necessary condition for any notion of tractability for integration in 
$F_{d,\bsgamma,q}$ for positive (integration) weights is $\lim_{j \rightarrow \infty}\gamma_j=0$. 

\item
A necessary condition for
strong polynomial tractablity is 
 \begin{equation}\label{cond:SPTint}
 \sum_{j=1}^{\infty}\gamma_j^{p/2}<\infty.
 \end{equation}
If $q$ of the form $q=2\ell/(2 \ell -1)$ with $\ell \in \N$, then this condition 
is also sufficient. 
  
 \item 
 For every $q \in (1,\infty)$ integration in $F_{d,\bsgamma,q}$ for positive 
 rules is polynomially tractable, if and only if 
  \begin{equation}\label{cond:PTint}
 \limsup_{d \rightarrow \infty}\frac{1}{\log d}\sum_{j=1}^d \gamma_j^{p/2} < \infty
 \end{equation}
holds. This condition is sufficient for polynomial tractability also for $q=\infty$.

 \item
For every $q \in (1,\infty)$ integration in $F_{d,\bsgamma,q}$ for positive rules is 
weakly tractable, if and only if 
 \begin{equation}\label{cond:WTint}
 \lim_{d \rightarrow \infty}\frac{1}{d}\sum_{j=1}^d \gamma_j^{p/2}=0
 \end{equation}
holds. This condition is sufficient for weak tractability also for  $q=\infty$.

\end{itemize}
\end{cor}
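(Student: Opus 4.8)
\textbf{Proof plan for Corollary~\ref{cor2}.}

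The plan is to derive all four bullet points by combining the lower bound of Theorem~\ref{thm2} with the (already-known) sufficiency results. The structure mirrors the standard tractability dictionary, so most of the work is reading off the consequences of the inequality $N_{q,\bsgamma}^{{\rm int}}(\varepsilon,d)\ge (1-2\varepsilon)\prod_{j=1}^d(1+\tau_p\gamma_j^{p/2})^{1/q}$.

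First I would establish the necessary conditions, which all come from Theorem~\ref{thm2}. Fix $\varepsilon\in(0,1/2)$, say $\varepsilon=1/4$, so the prefactor $1-2\varepsilon=1/2$ is a positive constant. For the condition $\lim_{j\to\infty}\gamma_j=0$: if this fails, there is $\delta>0$ and infinitely many $j$ with $\gamma_j^{p/2}\ge\delta$, hence $\prod_{j=1}^d(1+\tau_p\gamma_j^{p/2})^{1/q}\ge (1+\tau_p\delta)^{m(d)/q}$ where $m(d)\to\infty$ counts such indices among $[d]$; this grows exponentially in (a quantity tending to infinity with) $d$, so even weak tractability fails. For strong polynomial tractability, \eqref{def:SPT} with $\varepsilon=1/4$ forces $\prod_{j=1}^\infty(1+\tau_p\gamma_j^{p/2})<\infty$, and taking logarithms and using $\log(1+x)\ge x/2$ for $x\in(0,1]$ (valid since $\gamma_j\le1$ gives $\tau_p\gamma_j^{p/2}\le\tau_p<1$; one should check $\tau_p<1$, which is immediate from \eqref{def:taup}) yields $\sum_j\gamma_j^{p/2}<\infty$, i.e.\ \eqref{cond:SPTint}. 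For polynomial tractability, $N_{q,\bsgamma}^{{\rm int}}(1/4,d)\le Cd^\tau$ together with $\log(1+x)\ge x/2$ gives $\frac{\tau_p}{2q}\sum_{j=1}^d\gamma_j^{p/2}\le\log N_{q,\bsgamma}^{{\rm int}}(1/4,d)+\log 2\le \tau\log d+\log(2C)$, hence \eqref{cond:PTint}. For weak tractability, suppose \eqref{cond:WTint} fails; then there is $\delta>0$ and an increasing sequence $d_k$ with $\frac1{d_k}\sum_{j=1}^{d_k}\gamma_j^{p/2}\ge\delta$, so $\log N_{q,\bsgamma}^{{\rm int}}(1/4,d_k)\ge \frac{\tau_p}{2q}\delta d_k-\log 2$, and since $\varepsilon^{-1}=4$ is fixed along this sequence, $\frac{\log N_{q,\bsgamma}^{{\rm int}}(1/4,d_k)}{d_k+4}$ is bounded below by a positive constant, contradicting weak tractability.

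Next I would supply the sufficiency directions. For the polynomial and weak tractability bullets (including $q=\infty$), I would invoke the known sufficient conditions: by \eqref{ine:errdisc} we have $N_{q,\bsgamma}^{{\rm int}}(\varepsilon,d)\le N_{p,\bsgamma}^{{\rm disc}}(\varepsilon,d)$, so it suffices that \eqref{cond:PTint} (resp.\ \eqref{cond:WTint}) imply polynomial (resp.\ weak) tractability of the weighted $L_p$-discrepancy for \emph{every} $p\in(1,\infty)$ and $p=1$; this is part of the results announced around Corollary~\ref{cor1} (the matching sufficient conditions the paper says it proves for every $p$), which I am entitled to assume as ``stated earlier.'' For the strong polynomial tractability bullet when $q=2\ell/(2\ell-1)$, the Hölder conjugate is $p=2\ell$, an even integer, so the sufficiency of \eqref{cond:SPTint} follows from the cited result of \cite{LP03} (via \eqref{ine:errdisc} again). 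Thus in each of the three non-trivial sufficiency claims the upper bound is inherited from the discrepancy side.

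The main obstacle is the bookkeeping of which sufficiency statements are legitimately available: the $q=\infty$ cases and the ``every $p\in(1,\infty)$'' polynomial/weak sufficiency are the new upper bounds the paper promises, so the clean write-up must cite them as forthcoming results (or defer to the section where they are proved) rather than re-derive them here. A secondary, purely technical point is verifying the elementary inequalities used to convert products into sums — that $0<\tau_p<1$ from \eqref{def:taup} and that $\log(1+x)\ge cx$ on the relevant range — but these are one-line checks. Once these are in place, Corollary~\ref{cor2} is assembled by pairing each necessary condition (from Theorem~\ref{thm2}) with its matching sufficient condition, and Theorem~\ref{thm1} then follows by the chain $N_{p,\bsgamma}^{{\rm disc}}(\varepsilon,d)\ge N_{q,\bsgamma}^{{\rm int}}(\varepsilon,d)$.
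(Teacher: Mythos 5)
Your proposal matches the paper's proof in all essentials: the necessary conditions are read off from the lower bound of Theorem~\ref{thm2} by converting the product $\prod_{j=1}^d(1+\tau_p\gamma_j^{p/2})^{1/q}$ into a sum via elementary inequalities of the type $\log(1+x)\ge cx$ on $(0,\tau_p]$, and the sufficiency claims are inherited from the discrepancy side through $N^{{\rm int}}_{q,\bsgamma}(\varepsilon,d)\le N^{{\rm disc}}_{p,\bsgamma}(\varepsilon,d)$ together with Corollary~\ref{cor1} and the even-$p$ result of \cite{LP03}, exactly as the paper does. The only remark worth making is that your treatment of the first bullet (counting the $m(d)$ indices with $\gamma_j\ge\delta$) is if anything slightly more careful than the paper's, which simply assumes $\gamma_j\ge\Gamma$ for \emph{all} $j$; neither version literally excludes weak tractability when $m(d)=o(d)$, but that looseness is present in the paper's own argument and is not something your write-up introduces.
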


Note that the $q$'s considered in the second  item of Corollary~\ref{cor2} 
are the H\"older conjugates of even integers $p$. All these $q$'s belong to $(1,2]$.

\section{A general lower bound}\label{sec:genres}

In this section we provide a general lower bound on the information complexity of the integration problem in (not necessarily homogeneous) tensor products of spaces of univariate functions. This general result will be used to prove Theorem~\ref{thm2} (and thus Theorem~\ref{thm1}). In Section~\ref{sec:appl2} we will provide another application of this powerful result.

For $j \in \N$ let $(F^{(j)},\|\cdot\|^{(j)})$ be normed spaces of univariate integrable functions over the Borel measurable domain $D^{(j)} \subseteq \R$ and let, for $d \in \N$, $(F_d,\|\cdot\|_d)$ be the $d$-fold tensor product space $$F_d:=F^{(1)}\otimes \ldots \otimes F^{(d)},$$ which consists of functions over the domain $D_d:=D^{(1)}\times \ldots \times D^{(d)}$, equipped with a crossnorm $\|\cdot\|_d$, i.e., for elementary tensors of the form $f(x_1,\ldots,x_d)=f_1(x_1)\cdots f_d(x_d)$ with $f_j:D^{(j)} \rightarrow \R$ for $j \in [d]$, we have $\|f\|_d=\|f_1\|^{(1)} \cdots \|f_d\|^{(d)}$. 

Consider multivariate integration $$I_d(f):=\int_{D_d} f(\bsx) \rd \bsx \quad \mbox{for $f \in F_d$}.$$ The so-called initial error of this integration problem is 
\begin{equation}\label{def:int:err}
e(0,d)=\|I_d\|=\sup_{f \in F_d\atop \|f\|_d \le 1} \left|I_d(f)\right|.
\end{equation}
We assume that for every $j \in [d]$ there exists a so-called worst-case function $h^{(j)}$ in $F^{(j)}$, which is a function that satisfies $I_1(h^{(j)})= \|h^{(j)}\|^{(j)} e_j(0,1)$, 
where $e_j(0,1)$ is the initial error for univariate integration in $F^{(j)}$.
Furthermore, we also assume that the tensor product $h_d(\bsx):=h^{(1)}(x_1) \cdots h^{(d)}(x_d)$ is a worst-case function for $d>1$ and hence $e(0,d)=\prod_{j=1}^d e_j(0,1)$ is the initial error for integration in $F_d$.

We approximate integrals by linear algorithms of the form 
\begin{equation}\label{def:linAlg3}
A_{d,N}(f)=\sum_{k=1}^N a_k f(\bsx_k),
\end{equation}
where $\bsx_1,\bsx_2,\ldots,\bsx_N$ are in $D_d$ and $a_1,a_2,\ldots,a_N$ are non-negative integration weights\footnote{We confess that non-negativity of integration weights constitutes a certain restriction which is in general not legitimated by theory, but which is often favored by practitioners
because of the stability of positive quadrature formulas.}. 
The worst-case error of an algorithm is defined by
\begin{equation*}
e(F_d,A_{d,N})=\sup_{f \in F_d\atop \|f\|_d\le 1} \left|I_d(f)-A_{d,N}(f)\right|.
\end{equation*}
We define the $N$-th minimal worst-case error for positive linear rules as 
$$e(N,d):=\min_{A_{d,N}} e(F_d,A_{d,N}),$$ 
where the minimum is extended over all linear algorithms of the form \eqref{def:linAlg3} based on $N$ function evaluations along points $\bsx_1,\bsx_2,\ldots,\bsx_N$ from $D_d$ and with non-negative integration weights $a_1,\ldots,a_N$. 

For $\varepsilon \in (0,1)$ and $d \in \mathbb{N}$ define $$N(\varepsilon,d)=\min\{N \in \mathbb{N} \ : \ e(N,d) \le \varepsilon \, e(0,d)\}.$$ We stress that $N(\varepsilon,d)$ is a kind of restricted complexity since we only allow positive quadrature formulas.

The following result can be seen as the non-homogeneous extension of \cite[Theorem~4]{NP24}.

\begin{thm}\label{thm:gen}
For every $j \in [d]$: assume that there exists a worst-case function $h^{(j)}$ in $F^{(j)}$ and assume that $h_d(\bsx)=h^{(1)}(x_1)\cdots h^{(d)}(x_d)$ for $\bsx=(x_1,\ldots,x_d)\in D_d$ is a worst-case function in $F_d$.

Assume further that for every $y \in D^{(j)}$ there exists a function $s_y^{(j)} \in F^{(j)}$ such that $s_y^{(j)} \ge 0$ and $s_y^{(j)}(y)=h^{(j)}(y)$. Put
\begin{equation}\label{est:alphabeta}
\alpha_j:= \max_{y \in D^{(j)}} \|s_y^{(j)}\|^{(j)} \quad \mbox{and}\quad \beta_j:= \max_{y \in D^{(j)}} \int_{D^{(j)}} s_y^{(j)}(x) \rd x,
\end{equation} 
and 
\begin{equation}\label{def:C1new}
\bsalpha:=\prod_{j=1}^d \alpha_j, \quad \bsbeta :=\prod_{j=1}^d \beta_j, \quad \mbox{ and } \quad \widetilde{C}_d:=\min\left(\frac{\|h_d\|_d}{\bsalpha},\frac{I_d(h_d)}{\bsbeta}\right).
\end{equation}
Then $$N(\varepsilon,d) \ge \widetilde{C}_d\, (1-2 \varepsilon) \quad \mbox{for all $\varepsilon \in \left(0,\frac{1}{2}\right)$ and $d \in \N$.}$$ 
\end{thm}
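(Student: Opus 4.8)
The plan is to fix any positive linear algorithm $A_{d,N}(f)=\sum_{k=1}^N a_k f(\bsx_k)$ with $a_k\ge 0$ whose worst-case error satisfies $e(F_d,A_{d,N})\le \varepsilon\, e(0,d)$, and to derive a lower bound on $N$ by testing the algorithm on two carefully chosen families of inputs built from the worst-case function $h_d$. Since $h_d=h^{(1)}\otimes\cdots\otimes h^{(d)}$ is a worst-case function, applying $A_{d,N}$ to $\pm h_d/\|h_d\|_d$ gives $|I_d(h_d)-A_{d,N}(h_d)|\le \varepsilon\, e(0,d)\,\|h_d\|_d=\varepsilon\, I_d(h_d)$, using $e(0,d)=I_d(h_d)/\|h_d\|_d$ (this is what ``worst-case function'' means together with the normalization). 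Hence $A_{d,N}(h_d)\ge (1-\varepsilon) I_d(h_d)$; I expect this to be the easy half.

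The heart of the argument is to produce, for each node $\bsx_k=(x_{k,1},\ldots,x_{k,d})$, a nonnegative ``bump'' test function that agrees with $h_d$ at $\bsx_k$ but has small norm and small integral, so that a point set which is too sparse cannot have picked up enough of the mass of $h_d$. Concretely, define for each node the tensor $s_k(\bsx):=\prod_{j=1}^d s^{(j)}_{x_{k,j}}(x_j)$, where $s^{(j)}_{y}$ is the function guaranteed by the hypothesis: $s^{(j)}_y\ge 0$ and $s^{(j)}_y(y)=h^{(j)}(y)$. Then $s_k\ge 0$ on $D_d$, $s_k(\bsx_k)=h_d(\bsx_k)$, and by the crossnorm property $\|s_k\|_d=\prod_j\|s^{(j)}_{x_{k,j}}\|^{(j)}\le \prod_j\alpha_j=\bsalpha$, while by Fubini $I_d(s_k)=\prod_j \int_{D^{(j)}} s^{(j)}_{x_{k,j}}(x)\rd x\le \prod_j\beta_j=\bsbeta$. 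Now consider the single test function $F:=\sum_{k=1}^N s_k$. It is nonnegative, so $A_{d,N}(F)=\sum_{k=1}^N a_k F(\bsx_k)\ge \sum_{k=1}^N a_k s_k(\bsx_k)=\sum_{k=1}^N a_k h_d(\bsx_k)=A_{d,N}(h_d)\ge(1-\varepsilon)I_d(h_d)$, where the middle inequality drops the cross terms $s_{k'}(\bsx_k)\ge 0$ for $k'\ne k$. On the other hand, applying the error bound to $F/\|F\|_d$ gives $A_{d,N}(F)\le I_d(F)+\varepsilon\, e(0,d)\,\|F\|_d$. By the triangle inequality and the per-node estimates, $\|F\|_d\le \sum_k\|s_k\|_d\le N\bsalpha$ and $I_d(F)\le \sum_k I_d(s_k)\le N\bsbeta$. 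Chaining these, $(1-\varepsilon)I_d(h_d)\le A_{d,N}(F)\le N\bsbeta+\varepsilon\, e(0,d)\, N\bsalpha = N\bsbeta+\varepsilon\,\frac{I_d(h_d)}{\|h_d\|_d}N\bsalpha$. Rearranging, $N\ge (1-\varepsilon) I_d(h_d)\big/\big(\bsbeta+\varepsilon\,\tfrac{I_d(h_d)}{\|h_d\|_d}\bsalpha\big)$.

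It remains to show the right-hand side is at least $(1-2\varepsilon)\widetilde{C}_d$. Write $\widetilde{C}_d=\min(\|h_d\|_d/\bsalpha,\, I_d(h_d)/\bsbeta)$, so $\bsbeta\le I_d(h_d)/\widetilde{C}_d$ and $\bsalpha\le\|h_d\|_d/\widetilde{C}_d$; substituting into the denominator, $\bsbeta+\varepsilon\tfrac{I_d(h_d)}{\|h_d\|_d}\bsalpha \le \tfrac{I_d(h_d)}{\widetilde{C}_d}+\varepsilon\tfrac{I_d(h_d)}{\|h_d\|_d}\cdot\tfrac{\|h_d\|_d}{\widetilde{C}_d}=(1+\varepsilon)\tfrac{I_d(h_d)}{\widetilde{C}_d}$. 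Therefore $N\ge \widetilde{C}_d\,\tfrac{1-\varepsilon}{1+\varepsilon}\ge \widetilde{C}_d(1-2\varepsilon)$ for $\varepsilon\in(0,1/2)$, since $\tfrac{1-\varepsilon}{1+\varepsilon}\ge 1-2\varepsilon$. Since $A_{d,N}$ was an arbitrary positive rule achieving error $\le\varepsilon\,e(0,d)$, this shows $N(\varepsilon,d)\ge\widetilde{C}_d(1-2\varepsilon)$.

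The main obstacle to watch is the measurability/integrability bookkeeping: one must make sure $s^{(j)}_y$ genuinely lies in $F^{(j)}$ (so the tensor $s_k$ lies in $F_d$ and the crossnorm identity applies), that the maxima defining $\alpha_j,\beta_j$ in \eqref{est:alphabeta} are attained (or replace $\max$ by $\sup$ throughout), and that Fubini applies to factor $I_d(s_k)$. The only genuinely delicate inequality is the step $A_{d,N}(F)\ge A_{d,N}(h_d)$, which relies crucially on \emph{both} $a_k\ge 0$ \emph{and} $s_{k'}\ge 0$; this is exactly where the restriction to positive quadrature formulas enters and cannot be removed.
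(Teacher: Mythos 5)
Your proposal is correct and follows essentially the same route as the paper: the same test function $f^*=\sum_k s_k$ built from tensorized nonnegative bumps matching $h_d$ at the nodes, the same use of positivity to get $A_{d,N}(f^*)\ge A_{d,N}(h_d)$, and the same bounds $\|f^*\|_d\le N\bsalpha$, $I_d(f^*)\le N\bsbeta$. The only difference is cosmetic: you chain the two error-bound applications and solve directly for $N$ (obtaining the slightly sharper factor $\tfrac{1-\varepsilon}{1+\varepsilon}$), whereas the paper packages the same information into a single lower bound on the worst-case error and finishes with a case distinction on whether $N\le\widetilde{C}_d$.
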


\begin{rem}\rm
When applying Theorem~\ref{thm:gen}, strive for functions $s_y^{(j)}$ such that $\alpha_j < \|h^{(j)}\|^{(j)}$ and $\beta_j < \int_{D^{(j)}}  h^{(j)}(x)\rd x$, which yields that 
$$\min\left(\frac{\|h^{(j)}\|^{(j)}}{\alpha_j},\frac{I_1(h^{(j)})}{\beta_j}\right)>1 \quad 
\mbox{at least for many $j \in [d]$.}$$ 
The smaller $\alpha_j$ and $\beta_j$ are, the larger is the lower bound $\widetilde{C}_d$.
\end{rem}

\begin{proof}[Proof of Theorem~\ref{thm:gen}]
Consider an algorithm $A_{d,N}$ of the form \eqref{def:linAlg3} based on nodes $\bsx_1,\ldots,\bsx_N$ in $D_d$ and with non-negative integration weights $a_1,\ldots,a_N\ge 0$.  Let $x_{i,j}$ be the $j$-th coordinate of the point $\bsx_i$, $i \in \{1,\ldots,N\}$ and $j \in [d]$. For $i \in \{1,\ldots,N\}$ we define functions 
$$
P_i(\bsx) = s_{x_{i,1}}^{(1)}(x_1) s_{x_{i,2}}^{(2)}(x_2) \cdots s_{x_{i,d}}^{(d)}(x_d),\quad \mbox{for $\bsx=(x_1,\ldots,x_d)\in D_d$.}  
$$

In order to estimate the error of $A_{d,N}$ we consider the two functions $h_d$ and $f^*:= \sum_{i=1}^N P_i$. Since $A_{d,N}$ is a positive rule we have $A_{d,N}(f^*) \ge A_{d,N}(h_d)$. Then we have the error estimate 
\begin{equation}\label{errest1linneu}
e(F_d,A_{d,N})  \ge \frac{ \left(I_d(h_d) - I_d(f^*)\right)_+}{2 \max ( \Vert h_d \Vert_d, \Vert f^* \Vert_d)},
\end{equation}
which is trivially true if $I_d(h_d) \le  I_d(f^*)$ and which is easily shown if $I_d(h_d) >  I_d(f^*)$, because then 
\begin{align*}
\left(I_d(h_d) - I_d(f^*)\right)_+  \le & I_d(h_d) - A_{d,N}(h_d)+A_{d,N}(f^*)-I_d(f^*)\\
 \le & 2 \max(\|h_d\|_d,\|f^*\|_d) \, e(F_d,A_{d,N}).
\end{align*}

According to \eqref{est:alphabeta} we have $\Vert f^* \Vert_d \le N \bsalpha$ and $I_d(f^*) \le N  \bsbeta$. Inserting into \eqref{errest1linneu} yields 
\begin{equation}\label{lberr17}
e(N,d) \ge e(F_d,A_{d,N}) \ge \frac{(I_d(h_d) - N \bsbeta)_+} {2 \max(\|h_d\|_d,N \bsalpha)} = e(0,d) \frac{(1 - N \bsbeta/I_d(h_d))_+} {2 \max(1,N \bsalpha/\|h_d\|_d)}.
\end{equation}

Now let $\varepsilon \in (0,1/2)$ and assume that $e(N,d) \le \varepsilon \, e(0,d)$. This implies 
\begin{equation}\label{lberr17a}
2\, \varepsilon \max\left(1,N \frac{\bsalpha}{\|h_d\|_d}\right) \ge  \left(1 - N \frac{\bsbeta}{I_d(h_d)}\right)_+.
\end{equation}
If $N \le \widetilde{C}_d$, where $\widetilde{C}_d$ is from \eqref{def:C1new}, then we obtain $$N \ge \frac{I_d(h_d)}{\bsbeta} \, (1-2 \varepsilon) \ge \widetilde{C}_d \, (1-2 \varepsilon).$$ If $N > \widetilde{C}_d$, then we trivially have $N \ge \widetilde{C}_d \, (1-2 \varepsilon)$. 
This yields $$N(\varepsilon,d)\ge \widetilde{C}_d \, (1-2 \varepsilon),$$ and we are done.
\end{proof} 

\section{The proofs of Theorem~\ref{thm2} and Corollaries~\ref{cor2} and \ref{cor1}}\label{sec:proofs}

In this section we present the proofs of Theorem~\ref{thm2} (and hence of Theorem~\ref{thm1}) and subsequently that of Corollary~\ref{cor2} and Corollary~\ref{cor1}.

\begin{proof}[Proof of Theorem~\ref{thm2}] We apply Theorem~\ref{thm:gen} to the specific setting in Section~\ref{sec:int}.  Let $(F^{(j)},\|\cdot\|^{(j)})=(F_{1,\gamma_j,q},\|\cdot \|_{1,\gamma_j,q})$ and $(F_d,\|\cdot\|_d)=(F_{d,\bsgamma,q},\|\cdot \|_{d,\bsgamma,q})$. According to Lemma~\ref{le:interr} a worst-case function in $F_{1,\gamma_j,q}$ is $$h_{1,\gamma_j}(x)=1+\gamma_j \frac{1-(1-x)^p}{p}.$$

Let $a \in (0,1)$ that will be specified later and define
$$
s_y^{(j)}(x)=1+\frac{\gamma_j^{p/2}}{p} \times \left\{ 
\begin{array}{ll}
h_{1,1}(x) + h_{1,2,(0)}(x) & \mbox{if $y \le a$,}\\
h_{1,1}(x) + h_{1,2,(1)}(x) & \mbox{if $y \ge a$,}
\end{array}
\right.
$$
where 
\begin{align*}
h_{1,1}(x)  = & \frac{1-(1-a)^p}{a} \min(x,a),\\
h_{1,2,(0)}(x)  = & \mathbf{1}_{[0,a]}(x)\left((1-(1-x)^p)-\frac{x}{a}(1-(1-a)^p) \right),\\
h_{1,2,(1)}(x)  = & \mathbf{1}_{[a,1]}(x)\left((1-(1-x)^p)-(1-(1-a)^p) \right).
\end{align*}
Observe that $h_{1,1}(x)+h_{1,2,(0)}(x)+h_{1,2,(1)}(x)=1-(1-x)^p$ and that $h_{1,2,(0)}$ and $h_{1,2,(1)}$ have a disjoint support, namely $[0,a)$ and $(a,1]$, respectively.

Obviously, $s_y^{(j)}$ is non-negative, belongs to $F_{1,\gamma_j,q}$ and satisfies $s_y^{(j)}(y)=h^{(j)}(y)$ for all $y \in [0,1]$. Thus we can apply Theorem~\ref{thm:gen}, which yields, for $\varepsilon \in (0,1/2)$, $$N_{q,\bsgamma}^{{\rm int}}(N,d) \ge \widetilde{C}_d \, (1-2 \varepsilon),$$ with $\widetilde{C}_d$ like in \eqref{def:C1new}. It remains to estimate $\widetilde{C}_d$. 

To begin with, it is elementary to compute 
$$\left\Vert 1+\frac{\gamma_j^{p/2}}{p}(h_{1,1} + h_{1,2,(0)}) \right\Vert_{1,\gamma_j,q}^q= 1+\gamma_j^{p/2} \frac{1-(1-a)^{p+1}}{p+1}$$ and $$\left\Vert 1+\frac{\gamma_j^{p/2}}{p}(h_{1,1} + h_{1,2,(0)}) \right\Vert_{1,\gamma_j,q}^q= 1+\gamma_j^{p/2} \frac{(1-a)^{p+1}}{p+1}.$$ Thus, both quantities coincide if we choose 
\begin{equation}\label{def:a}
a=1-\frac{1}{2^{1/(p+1)}}
\end{equation}
and then $$\alpha_j=\left(1+\frac{\gamma_j^{p/2}}{2(p+1)}\right)^{1/q} \quad \mbox{and}\quad \bsalpha= \prod_{j=1}^d \left(1+\frac{\gamma_j^{p/2}}{2(p+1)}\right)^{1/q}.$$ Hence
$$\frac{\|h_{d,\bsgamma}\|_{d,\bsgamma,q}}{\bsalpha}=\prod_{j=1}^d \left(\frac{1+\frac{\gamma_j^{p/2}}{p+1}}{1+\frac{\gamma_j^{p/2}}{2(p+1)}} \right)^{1/q} \ge \prod_{j=1}^d \left(1+\frac{\gamma_j^{p/2}}{2 p+3} \right)^{1/q},$$ where in the last step we used that $\gamma_j \le 1$.

Furthermore, 
$$
\beta_j = 1+\frac{\gamma_j^{p/2}}{p} \max \left( \int_0^1 h_{1,1}(x) + h_{1,2,(0)}(x)\rd x , \int_0^1 h_{1,1}(x) + h_{1,2,(1)}(x)\rd x \right). 
$$
For $a$ like in \eqref{def:a} we have 
$$\int_0^1 h_{1,1}(x)+h_{1,2,(0)}(x) \rd x = \frac{1}{2} \frac{p}{p+1}$$ and $$\int_0^1 h_{1,1}(x)+h_{1,2,(1)}(x) \rd x = \frac{1}{2} \frac{p}{p+1} + \frac{1+2^{p/(p+1)}-2^{1/(p+1)}}{4}.$$ Hence $$\beta_j=1+\frac{\gamma_j^{1/p}}{2(p+1)} \left(1 + \frac{p+1}{2p} (1+2^{p/(p+1)}-2^{1/(p+1)}) \right).$$ Therefore we obtain 
\begin{align*}
\frac{I_d(h_{d,\bsgamma})}{\bsbeta}&=\prod_{j=1}^d \frac{1+\frac{\gamma_j^{p/2}}{p+1}}{1+\frac{\gamma_j^{1/p}}{2(p+1)} \left(1 + \frac{p+1}{2p} (1+2^{p/(p+1)}-2^{1/(p+1)}) \right)}\\
& \ge \prod_{j=1}^d \left(1+\gamma_j^{p/2} \tau_p\right),
\end{align*} 
where we again used that $\gamma_j \le 1$ and where $\tau_p$ is defined in \eqref{def:taup}. Since $\frac{p+1}{2p} (1+2^{p/(p+1)}-2^{1/(p+1)})<1$ it is clear that $\tau_p>0$ (note that $p \in (1,\infty)$).

In any case, we have
\begin{align*}
\widetilde{C}_q & \ge \prod_{j=1}^d \left(1+\tau_p \gamma_j^{p/2}\right)^{1/q}
\end{align*}
and this finishes the proof of Theorem~\ref{thm2}.
\end{proof}

\begin{proof}[Proof of Corollary~\ref{cor2}]
We use standard arguments. Assume first that there exists a real $\Gamma>0$ such that $\gamma_j\ge \Gamma$ for all $j \in \N$. Then it follows from Theorem~\ref{thm2} that $$N_{q,\bsgamma}^{{\rm int}}(\varepsilon,d) \ge (1-2\varepsilon) \left(1+\tau_p \Gamma^{p/2}\right)^{d/q}.$$ This means that the integration problem suffers from the curse of dimensionality. Thus, $\lim_{j \rightarrow \infty}\gamma_j=0$ is a necessary condition for tractability.

Now suppose that $\sum_{j=1}^{\infty} \gamma_j^{p/2} =\infty$. Since $\prod_{j=1}^d (1+\tau_p \gamma_j^{p/2}) \ge 1+\tau_p \sum_{j=1}^d \gamma_j^{p/2}$ it follows that for $\varepsilon \in (0,1/2)$ we have $$\lim_{d \rightarrow \infty} N_{q,\bsgamma}^{{\rm int}}(\varepsilon,d) =\infty$$ and hence we cannot have strong polynomial tractability. Thus, \eqref{cond:SPTint} is a necessary condition for strong polynomial tractability.

Next suppose that $\limsup_{d \rightarrow \infty} \frac{1}{\log d} \sum_{j=1}^d \gamma_j^{p/2} =\infty$. Note that $0< \gamma_j^{p/2} \tau_p \le \tau_p<1$ and $\lim_{x \rightarrow \infty} \tfrac{1}{x}\log(1+x) =1$. Hence, there exists a positive real $\ell$ such that $\log(1+\gamma_j^{p/2} \tau_p) > \ell \gamma_j^{p/2} \tau_p$ for all $j \in \N$. This gives $$\log \prod_{j=1}^d\left(1+\gamma_j^{p/2} \tau_p\right)^{1/q} \ge \frac{\ell \tau_p}{q} \sum_{j=1}^d \gamma_j^{p/2}$$ and hence $$\prod_{j=1}^d\left(1+\gamma_j^{p/2} \tau_p\right)^{1/q} \ge \exp\left(\frac{\ell \tau_p}{q} \sum_{j=1}^d \gamma_j^{p/2}\right)=d^{\frac{\ell \tau_p}{q} \frac{1}{\log d}\sum_{j=1}^d \gamma_j^{p/2}}. $$ Hence, $N_{q,\bsgamma}^{{\rm int}}(\varepsilon,d)$ goes to infinity faster than any power of $d$ and we cannot have polynomial tractability. This means that \eqref{cond:PTint} is a necessary condition for polynomial tractability.

Finally, similarly to the above it follows from Theorem~\ref{thm2} that $$\log N_{q,\bsgamma}^{{\rm int}}(\varepsilon,d) \ge \frac{\tau_p \ell}{q} \sum_{j=1}^d \gamma_j^{p/2}$$ and hence $$\frac{\log N_{q,\bsgamma}^{{\rm int}}(\varepsilon,d) }{d+\varepsilon^{-1}} \ge \frac{\tau_p \ell}{q} \frac{d}{d+\varepsilon^{-1}} \frac{1}{d} \sum_{j=1}^d \gamma_j^{p/2}.$$ Thus, weak tractability implies \eqref{cond:WTint}.

The sufficient conditions follow directly from 
Corollary~\ref{cor1}, which will be proven immediately afterwards.
\end{proof}

\begin{proof}[Proof of Corollary~\ref{cor1}]
The sufficiency of \eqref{cond:SPT} for strong polynomial tractability for {\it even} integers $p$ was shown in \cite{LP03}. It remains to prove the sufficient conditions for polynomial- and weak tractablilty, respectively, for general $p \in [1,\infty)$. To this end, we first show a general upper bound on the inverse of the $N$-th minimal weighted $L_p$-discrepancy.

Let $p \in [1,\infty)$. According to the definition in \eqref{def:wLpdisc} we have
\begin{equation*}
L_{p,\bsgamma,N}(\cP)\le \left(\sum_{\emptyset \not=\uu \subseteq [d]}\gamma_{\uu}^{p/2}\left( D_N^{\ast}(\cP_{\uu}) \right)^p \right)^{1/p},
\end{equation*}
where $\cP_{\uu}$ is the set of projections of the points from $\cP$ to the coordinate directions which belong to $\uu$, and $D_N^{\ast}$ is the usual star-discrepancy of a point set, i.e., $D_N^{\ast}(\cP)=\|\Delta_{\cP}\|_{L_{\infty}}$. It is shown in \cite[Proof of Theorem~1]{HPS08} that there exists an absolute constant $C>0$ such that for every $d, N \in \N$ there exists an $N$-point set $\cP$ in $[0,1)^d$ such that $$D_N^{\ast}(\cP_{\uu}) \le C \max(1,\sqrt{\log d}) \, \sqrt{\frac{|\uu|}{N}} \quad \mbox{for all non-empty $\uu \subseteq [d]$.}$$ Therefore, and since $|\uu| \le 2^{|\uu|}$ for $\uu \not=\emptyset$, we obtain
\begin{align*}
{\rm disc}_{p,\bsgamma}(N,d) & \le C\, \frac{\max(1,\sqrt{\log d})}{\sqrt{N}}\left(\sum_{\emptyset \not=\uu \subseteq [d]}\gamma_{\uu}^{p/2} 2^{|\uu| p/2} \right)^{1/p}\\
& \le C\, \frac{\max(1,\sqrt{\log d})}{\sqrt{N}} \prod_{j=1}^d \left(1+ \gamma_j^{p/2} 2^{p/2}\right)^{1/p}.
\end{align*}
This estimate together with the observation that ${\rm disc}_{p,\bsgamma}(0,d) >1$ implies that 
\begin{equation}\label{ubd:infdisc1}
N_{p,\bsgamma}^{{\rm disc}}(\varepsilon,d) \le \varepsilon^{-2} \, C^2 \, \max(1,\log d) \, \prod_{j=1}^d \left(1+ \gamma_j^{p/2} 2^{p/2}\right)^{2/p}.
\end{equation}
This is the promised estimate of the inverse of the $N$-th minimal weighted $L_p$-discrepancy. We employ \eqref{ubd:infdisc1} in order to prove sufficiency of the proposed conditions on the weights.

First, assume that \eqref{cond:PT} holds. Then for every $\delta >0$ there exists a $d_{\delta}\in \N$ such that for all $d\ge d_{\delta}$ we have $$\frac{1}{\log d} \sum_{j=1}^d \gamma_j^{p/2} < A+\delta.$$ Then, for $d \ge d_{\delta}$, we get
\begin{eqnarray*}
\prod_{j=1}^d \left(1+ \gamma_j^{p/2} 2^{p/2}\right) \le \exp\left(2^{p/2} \sum_{j=1}^d \gamma_j^{p/2} \right) = d^{2^{p/2} (\sum_{j=1}^d \gamma_j^{p/2})/\log d} \le d^{2^{p/2} (A+\delta)}. 
\end{eqnarray*} 
Therefore, there exists a positive real $c_{\delta}$ such that we have $$\prod_{j=1}^d \left(1+ \gamma_j^{p/2} 2^{p/2}\right) \le c_{\delta}\, d^{2^{p/2} (A+\delta)} \quad \mbox{for all $d \in \N$.}$$ Inserting in \eqref{ubd:infdisc1} we obtain
$$N_{p,\bsgamma}^{{\rm disc}}(\varepsilon,d) \le  C^2 \,  c_{\delta}^{2/p}\, \max(1,\log d) \, d^{2^{1+p/2} (A+\delta)/p}\, \varepsilon^{-2}$$ and hence we have polynomial tractability.

Finally, assume that \eqref{cond:WT} holds. For $d \ge 2$ we have
\begin{equation*}
\frac{N_{p,\bsgamma}^{{\rm disc}}(\varepsilon,d)}{d+\varepsilon^{-1}} \le \frac{2 \log \varepsilon^{-1} + 2 \log C + \log\log d + \frac{2}{p} 2^{p/2} \sum_{j=1}^d \gamma_j^{p/2}}{d+\varepsilon^{-1}},
\end{equation*}
where the right hand side tends to 0 whenever $d+\varepsilon^{-1} \rightarrow \infty$, according to \eqref{cond:WT}. Hence we have weak tractability.
\end{proof}

Unfortunately, because of the annoying $\log d$-term, we cannot use \eqref{ubd:infdisc1} in order to prove sufficient conditions for strong polynomial tractability. Using \eqref{ubd:infdisc1}, then, for $p \in [1,\infty)$, condition \eqref{cond:SPT} only implies polynomial tractability, however, with $\varepsilon$-exponent zero\footnote{More precisely, we have generalized $(T,\Omega)$-tractability with $T(\varepsilon^{-1},d)=\varepsilon^{-1} (1+\log d)$ and $\Omega=[1,\infty)\times \N$. See \cite[Section~4.4.3]{NW08} for the notion of generalized tractability.}.

\section{A further application of Theorem~\ref{thm:gen}: integration of polynomials}\label{sec:appl2}

We give another application in order to demonstrate the power of Theorem~\ref{thm:gen}. Here we study integration of $C^{\infty}$-functions, in particular, we restrict to real polynomials of degree at most two over the unit interval. 

Let $\bsgamma=(\gamma_j)_{j \ge 1}$ be a sequence of positive coordinate weights and let $q \in (1,\infty)$. Sticking to the notation in Section~\ref{sec:genres}, for $j \in \N$ consider $F^{(j)}=P_2$ as the space of all real polynomials of degree at most 2 as functions over $[0,1]$ with finite ``weighted'' $q$-norm $$\|f\|^{(j)}=\|f\|_{q,\gamma_j}:=\left(\|f\|_{L_q}^q+\frac{1}{\gamma_j}\left(\|f'\|_{L_q}^q+\|f''\|_{L_q}^q\right)\right)^{1/q}.$$ Consider integration $I_d(f)=\int_{[0,1]^d }f(\bsx)\rd \bsx$ for $f$ in the tensor product space $F_d$. This problem is studied in \cite[Section~10.5.4]{NW10} (for the Hilbert-space case $q=2$) and in \cite{NP24b} (for general $q \in (1,\infty)$), but in both instances only for the unweighted setting (i.e., $\gamma_j \equiv 1$). It is shown that then 
the problem suffers from the curse of dimensionality. Hence it is no real restriction when we restrict ourselves to coordinate weights $\gamma_j \in (0,1]$ in the following.

The initial error for the present problem is $e(0,d)=1$ and the (univariate) worst-case function is $h^{(j)}(x)=1$ for $x \in [0,1]$ for every $j$. Obviously $\|h^{(j)}\|_{q,\gamma_j}=1$ and $\int_0^1 h^{(j)}(x)\rd x =1$. 

We can exploit Theorem~\ref{thm:gen} in order to show the following result.

\begin{thm}\label{thm7}
Let $\bsgamma=(\gamma_j)_{j \ge 1}$ be a sequence of positive coordinate weights at most one and let $q \in (1,\infty)$. Consider integration over the unit-cube in the $d$-fold tensor product of spaces $(P_2,\|\cdot\|_{q,\gamma_j})$.  Put 
\begin{equation}\label{def:rho}
\rho_q:= \frac{u_q}{12} - (2 u_q)^q \frac{q+2}{q+1},
\end{equation}
with some positive number $u_q$ that is strictly less than  $\left(\frac{1}{12 \cdot 2^q} \frac{q+1}{q+2}\right)^{1/(q-1)}$.

Then we have $\rho_q>0$ and $$N(\varepsilon,d) \ge (1-2\varepsilon)\, \prod_{j=1}^d \left(1+\gamma_j^{1/(q-1)} \rho_q\right)^{1/q}  \quad \mbox{for all $\varepsilon \in \left(0,\frac{1}{2}\right)$ and $d \in \N$}.$$ In particular, considering integration with positive rules, a necessary condition for 
\begin{itemize}
\item strong polynomial tractability is $$\sum_{j=1}^{\infty} \gamma_j^{1/(q-1)}< \infty;$$
\item polynomial tractability is $$\limsup_{d \rightarrow \infty}\frac{1}{\log d}\sum_{j=1}^d \gamma_j^{1/(q-1)} < \infty;$$
 \item weak tractability is $$\lim_{d \rightarrow \infty}\frac{1}{d}\sum_{j=1}^d \gamma_j^{1/(q-1)}=0.$$
\end{itemize}
\end{thm}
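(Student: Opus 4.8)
The plan is to apply Theorem~\ref{thm:gen} to the tensor product space $F_d$ built from the univariate spaces $(P_2,\|\cdot\|_{q,\gamma_j})$, exactly as was done for the discrepancy problem in the proof of Theorem~\ref{thm2}. Here the situation is simpler because the univariate worst-case function is the constant $h^{(j)}\equiv 1$, with $\|h^{(j)}\|_{q,\gamma_j}=1$ and $\int_0^1 h^{(j)}=1$, so that $\|h_d\|_d=1$ and $I_d(h_d)=1$. The task reduces to constructing, for each $y\in[0,1]$, a nonnegative polynomial $s_y^{(j)}\in P_2$ with $s_y^{(j)}(y)=1$, and then computing (or bounding from above) the two quantities $\alpha_j=\max_{y}\|s_y^{(j)}\|_{q,\gamma_j}$ and $\beta_j=\max_y\int_0^1 s_y^{(j)}$. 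If both turn out to be of the form $1+(\text{something})\cdot\gamma_j^{1/(q-1)}$ with a negative coefficient on the ``something'', then $\widetilde C_d\ge\prod_j(1+c\,\gamma_j^{1/(q-1)})^{1/q}$ for a positive constant $c$, which is the asserted lower bound, and the tractability consequences follow by the same standard arguments used in the proof of Corollary~\ref{cor2}.

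First I would pick a good family of nonnegative quadratics pinned at the value $1$ at $x=y$. The natural choice is something like $s_y(x)=1-u_q\,\widetilde q_y(x)$ where $\widetilde q_y$ is a quadratic vanishing at $y$, chosen so that $s_y$ stays nonnegative on $[0,1]$ but has strictly smaller $L_q$-norm and strictly smaller integral than the constant $1$; the free scalar $u_q>0$ is what appears in \eqref{def:rho}. A convenient candidate is to subtract a multiple of $(x-y)^2$ or, better, to subtract a fixed ``bump'' that is largest in the interior — e.g.\ something proportional to $x(1-x)$ shifted so that it vanishes at $y$ — because that both lowers the integral (a strict gain of order $\int_0^1 x(1-x)\rd x=1/6$ per unit of $u_q$, which is where the $u_q/12$ in \eqref{def:rho} comes from after the factor $1/2$) and, crucially, makes $s_y''$ a nonzero constant. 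The derivative terms in the norm are then $\tfrac1{\gamma_j}(\|s_y'\|_{L_q}^q+\|s_y''\|_{L_q}^q)$, which is $O(u_q^q)/\gamma_j$, and one factors $\tfrac1{\gamma_j}=\gamma_j^{1/(q-1)}\cdot\gamma_j^{-q/(q-1)}\le\gamma_j^{1/(q-1)}$ using $\gamma_j\le1$ — this is exactly the step that converts the $1/\gamma_j$ in the norm into the power $\gamma_j^{1/(q-1)}$ in the final bound. Collecting, $\alpha_j^q=1-(\text{positive})\,\gamma_j^{1/(q-1)}+\ldots$ and $\beta_j=1-\tfrac{u_q}{12}\gamma_j^{1/(q-1)}+\ldots$; taking $u_q$ small enough — precisely, below the threshold $\big(\tfrac{1}{12\cdot 2^q}\tfrac{q+1}{q+2}\big)^{1/(q-1)}$ named in the statement — guarantees the net coefficient $\rho_q$ in \eqref{def:rho} is strictly positive, since for small $u_q$ the linear term $u_q/12$ dominates the term $(2u_q)^q\tfrac{q+2}{q+1}$ of order $u_q^q$ with $q>1$.

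Once $\widetilde C_d\ge\prod_{j=1}^d(1+\rho_q\gamma_j^{1/(q-1)})^{1/q}$ is established with $\rho_q>0$, the three tractability statements are immediate copies of the arguments in the proof of Corollary~\ref{cor2}: if $\sum_j\gamma_j^{1/(q-1)}=\infty$ then $\widetilde C_d\ge 1+\rho_q\sum_{j=1}^d\gamma_j^{1/(q-1)}\to\infty$, ruling out strong polynomial tractability; using $\log(1+x)\ge \ell x$ for $x$ in the bounded range $(0,\rho_q]$ gives $\widetilde C_d\ge d^{(\ell\rho_q/q)\frac1{\log d}\sum_{j=1}^d\gamma_j^{1/(q-1)}}$, so $\limsup_d \frac1{\log d}\sum_{j=1}^d\gamma_j^{1/(q-1)}=\infty$ rules out polynomial tractability; and the same estimate gives $\log N(\varepsilon,d)\gtrsim \sum_{j=1}^d\gamma_j^{1/(q-1)}$, whence $\log N(\varepsilon,d)/(d+\varepsilon^{-1})\not\to0$ unless $\frac1d\sum_{j=1}^d\gamma_j^{1/(q-1)}\to0$.

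The main obstacle is the explicit construction of $s_y^{(j)}$ together with the sharp bookkeeping of $\alpha_j$ and $\beta_j$: one must exhibit a single quadratic family, valid \emph{uniformly} in $y\in[0,1]$, that is genuinely nonnegative (this is the constraint that forces $u_q$ to be small and that determines which ``bump'' shape works), and then one must extract the leading-order dependence on $\gamma_j$ cleanly enough that the constant $\rho_q$ comes out exactly as in \eqref{def:rho}. The nonnegativity requirement for \emph{every} anchor $y$ is the delicate point — near $y=0$ or $y=1$ the quadratic has little room to dip — and it is what dictates the precise numerical threshold on $u_q$ appearing in the theorem; everything after that is routine calculus and the already-proven Theorem~\ref{thm:gen}.
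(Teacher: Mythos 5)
Your overall strategy is exactly the paper's: apply Theorem~\ref{thm:gen} with $h^{(j)}\equiv 1$ and fitting functions of the form $s_y^{(j)}(x)=1-c_j(x-y)^2$, then read off $\alpha_j$ and $\beta_j$ and run the Corollary~\ref{cor2} arguments. (The paper uses precisely the $(x-y)^2$ bump, the first of your two candidates; note that with this choice nonnegativity is not delicate at all, since $c_j(x-y)^2\le c_j<1$ for every anchor $y\in[0,1]$, so the endpoint worry you flag as the ``main obstacle'' does not arise.)

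There is, however, one genuinely wrong step at the crux of the computation. You write that the derivative contribution is $O(u_q^q)/\gamma_j$ and then ``factor'' $\frac{1}{\gamma_j}=\gamma_j^{1/(q-1)}\cdot\gamma_j^{-q/(q-1)}\le\gamma_j^{1/(q-1)}$ using $\gamma_j\le 1$. That inequality is backwards: for $\gamma_j\le1$ one has $\gamma_j^{-q/(q-1)}\ge1$, hence $1/\gamma_j\ge\gamma_j^{1/(q-1)}$. With a bump amplitude $u_q$ independent of $\gamma_j$, the term $\|(s_y^{(j)})''\|_{L_q}^q/\gamma_j=(2u_q)^q/\gamma_j$ blows up as $\gamma_j\to0$, so $\alpha_j>1$ and the lower bound from Theorem~\ref{thm:gen} degenerates. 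The correct mechanism --- which your final formulas $\beta_j=1-\frac{u_q}{12}\gamma_j^{1/(q-1)}+\cdots$ silently presuppose but which you never state --- is to scale the amplitude itself, setting $c_j:=u_q\gamma_j^{1/(q-1)}$. Then $\beta_j=1-c_j/12=1-\frac{u_q}{12}\gamma_j^{1/(q-1)}$, while the derivative penalty becomes $(2c_j)^q\frac{q+2}{q+1}\frac{1}{\gamma_j}=(2u_q)^q\frac{q+2}{q+1}\,\gamma_j^{q/(q-1)-1}=(2u_q)^q\frac{q+2}{q+1}\,\gamma_j^{1/(q-1)}$, so gain and loss are both of order $\gamma_j^{1/(q-1)}$ and the net coefficient is exactly $\rho_q$ from \eqref{def:rho}. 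The rest of your outline (bounding $\|s_y^{(j)}\|_{L_q}^q\le I_1(s_y^{(j)})$ since $s_y^{(j)}\in[0,1]$ and $q>1$, and passing from $\widetilde C_d\ge\prod_j(1+\rho_q\gamma_j^{1/(q-1)})^{1/q}$ to the three tractability consequences) matches the paper.
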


\begin{proof}
In order to apply Theorem~\ref{thm:gen} we define, for $j \in \N$, $s_y^{(j)}(x):=1-c_j(x-y)^2$, $x \in [0,1]$, with some quantity $c_j \in (0,1/2)$ that will be specified later. Then $s_y^{(j)}$ is non-negative, belongs to $F^{(j)}$ and, obviously, $s_y^{(j)}(y)=1=h^{(j)}(y)$ for every $y \in [0,1]$. Thus, Theorem~\ref{thm:gen} implies $$N(\varepsilon,d) \ge \widetilde{C}_d \, (1-2\varepsilon) \quad \mbox{for all $\varepsilon \in \left(0,\frac{1}{2}\right)$ and $d \in \N$},$$ where $\widetilde{C}_d=\min\left(\bsalpha^{-1},\bsbeta^{-1}\right)$, with $\bsalpha$ and $\bsbeta$ as defined in \eqref{def:C1new}. It remains to estimate these two quantities.

To begin with, it  is easy to show that $$I_1(s_y^{(j)})=1-c_j\left(y^2-y+\frac{1}{3}\right)$$ and hence $\beta_j=1-\frac{c_j}{12}$ and $\bsbeta=\prod_{j=1}^d \left(1-\frac{c_j}{12}\right)$.

In order to estimate $\bsalpha$ we need to estimate $\|s_y^{(j)}\|_{q,\gamma_j}$. Since $s_y^{(j)}(x) \in [0,1]$ and $q > 1$ we have $$\|s_y^{(j)}\|_{L_q}^q =\int_0^1 |s_y^{(j)}(x)|^q \rd x \le I_1(s_y^{(j)}) \le \beta_j = 1-\frac{c_j}{12}.$$ Furthermore, since $(s_y^{(j)}(x))'=2 c_j (y-x)$ and $(s_y^{(j)}(x))''=-2c_j$ we have $$\|(s_y^{(j)})'\|_{L_q}^q=\int_0^1|2 c_j(y-x)|^q \rd x=(2 c_j)^q \frac{y^{q+1}+(1-y)^{q+1}}{q+1} \le \frac{(2 c_j)^q}{q+1}$$ and $$\|(s_y^{(j)})''\|_{L_q}^q=\int_0^1|-2 c_j|^q \rd x = (2c_j)^q.$$ Hence $$\|s_y^{(j)}\|_{q,\gamma_j}^q \le  1-\frac{c_j}{12} + \frac{(2 c_j)^q}{\gamma_j} \frac{q+2}{q+1}.$$ For $q \in (1,\infty)$  a necessary condition such that the latter is strictly less than 1, i.e., strictly less than $\|h^{(j)}\|_{q,\gamma_j}$, is $$c_j< \left(\frac{1}{12 \cdot 2^q} \frac{q+1}{q+2}\right)^{1/(q-1)} \gamma_j^{1/(q-1)}.$$ Now we fix the initially unspecified quantity $c_j$ by setting $c_j:= u_q \gamma_j^{1/(q-1)}$ for $j \in \N$, where $u_q$ is a positive real that is strictly less than $\left(\frac{1}{12 \cdot 2^q} \frac{q+1}{q+2}\right)^{1/(q-1)}$. Note that $c_j \in (0,1/2)$ for $q \in (1,\infty)$, where we use that $\gamma_j \le 1$. Then $$\alpha_j \le \left(1-\frac{u_q \gamma_j^{1/(q-1)}}{12} + \frac{(2 u_q \gamma_j^{1/(q-1)})^q}{\gamma_j} \frac{q+2}{q+1} \right)^{1/q}.$$ Note that also  $$\beta_j=1-\frac{u_q \gamma_j^{1/(q-1)}}{12} \le \left(1-\frac{u_q \gamma_j^{1/(q-1)}}{12} + \frac{(2 u_q \gamma_j^{1/(q-1)})^q}{\gamma_j}  \frac{q+2}{q+1} \right)^{1/q}.$$ Thus,
\begin{eqnarray}\label{lbd:polyN}
\widetilde{C}_q & = & \min\left(\frac{1}{\prod_{j=1}^d \alpha_j} , \frac{1}{\prod_{j=1}^d \beta_j}\right)\nonumber\\
& \ge & \prod_{j=1}^d \left(1-\frac{u_q \gamma_j^{1/(q-1)}}{12} + \frac{(2 u_q \gamma_j^{1/(q-1)})^q}{\gamma_j}  \frac{q+2}{q+1} \right)^{-1/q}\nonumber \\
& = & \prod_{j=1}^d \left(1-\gamma_j^{1/(q-1)} \rho_q\right)^{-1/q}\nonumber \\
& \ge & \prod_{j=1}^d \left(1+\gamma_j^{1/(q-1)} \rho_q\right)^{1/q},
\end{eqnarray}
where $\rho_q$ is defined in \eqref{def:rho}. Note that $\rho_q>0$, because $u_q$ is strictly less than $\left(\frac{1}{12 \cdot 2^q} \frac{q+1}{q+2}\right)^{1/(q-1)}$. 

The necessary conditions for the three notions of tractability follow from \eqref{lbd:polyN} in the same way like in the proof of Corollary~\ref{cor2}.
\end{proof}

We leave it as an open problem to show that the necessary conditions of Theorem~\ref{thm7} are also sufficient.

\paragraph{Funding.} No funds, grants, or other support was received.

\paragraph{Data availability.} Data sharing is not applicable to this article as no datasets were generated or analysed during the current study.

\vspace{0.5cm}
\noindent{\bf Author's Address:}\\

\noindent Erich Novak, Mathematisches Institut, FSU Jena, Ernst-Abbe-Platz 2, 07740 Jena, Germany. Email: erich.novak@uni-jena.de\\

\noindent Friedrich Pillichshammer, Institut f\"{u}r Finanzmathematik und Angewandte Zahlentheorie, JKU Linz, Altenbergerstra{\ss}e 69, A-4040 Linz, Austria. Email: friedrich.pillichshammer@jku.at

\end{document}